\providecommand{\U}[1]{\protect\rule{.1in}{.1in}}
\newtheorem{theorem}{Theorem}
\newtheorem{corollary}[theorem]{Corollary}
\newtheorem{definition}[theorem]{Definition}
\newtheorem{lemma}[theorem]{Lemma}
\newtheorem{notation}[theorem]{Notation}
\newenvironment{proof}[1][Proof]{\textbf{#1.} }{\ \rule{0.5em}{0.5em}}
\begin{document}

\title{From Lie Algebras to Lie Groups\\within Synthetic Differential Geometry:\\Weil Sprouts of \\Lie's Third Fundamental Theorem}
\author{Hirokazu NISHIMURA\\Institute of Mathematics\\University of Tsukuba\\Tsukuba, Ibaraki, 305-8571, JAPAN}
\maketitle

\begin{abstract}
Weil prolongations of a Lie group are naturally Lie groups. It is not known in
the theory of infinite-dimensional Lie groups how to construct a Lie group
with a given Lie algebra as its Lie algebra or whether there exists such a Lie
group at all. We will show in this paper how to construct some Weil
prolongations of this mythical Lie group from a given Lie algebra. We will do
so within our favorite framework of synthetic differential geometry.

\end{abstract}

\section{\label{s1}Introduction}

In the theory of finite-dimensional Lie groups, Lie's third fundamental
theorem is usually established via the Levi decomposition. The
Levi-Mal'\v{c}ev theorem asserts the existence of a Levi decomposition for any
finite-dimensional Lie algebra. That is to say, any finite-dimensional Lie
algebra is the semidirect product of a solvable Lie algebra $\mathfrak{m}%
$\ and a semisimple Lie algebra $\mathfrak{q}$. Since it is easy to establish
Lie's third fundamental theorem in both the solvable case and the semisimple
one, the desired Lie group is obtained as the semidirect product of the
established Lie groups with their respective Lie algebras $\mathfrak{m}$\ and
$\mathfrak{q}$, for which the reader is referred, e.g., to \S 3.15 of
\cite{va}.

This route to Lie's third fundamental theorem does not seem susceptible of any
meaningful infinite-dimensional generalization, and, as far as we know, Lie's
third fundamental theorem is not available in the theory of
infinite-dimensional Lie groups at present. In this sense ''a Lie group with a
given Lie algebra as its Lie algebra'' beyond the finite-dimensional realm is
\textit{mythical}. The principal objective in this paper is to show that some
Weil prolongations of this mythical Lie group are \textit{real} to our great
surprise, which can be regarded as \textit{Weil sprouts} of Lie's third
fundamental theorem in a sense. We will do so within our favorite framework of
synthetic differential geometry, for which the reader is referred to
\cite{la}. Our considerations shall be restricted to lower-dimensional cases
because of computational complexity.

The construction of this paper goes as follows. After giving some
preliminaries in the coming section, we will study some Weil prolongations of
a Lie group, which are again Lie groups, and their Lie algebras in
\S \ref{s3}. In the succeeding section we will establish a slight
generalization of the Baker-Campbell-Hausdorff formula discussed in the
concluding two sections of our previous paper \cite{ni}, which enables us to
endow some Weil prolongations of a given Lie algebra with Lie group
structures. The concluding two sections are devoted to showing that the
derived structures really make some Weil prolongations of a given Lie algebra
Lie groups of the desired Lie algebras.

\section{\label{s2}Preliminaries}

We assume the reader to be familiar with the first three chapters of \cite{la}
as well as the first five section of \cite{ni}.

The following theorem is due to Kock \cite{ko}.

\begin{theorem}
\label{t2.1}Let $\mathbb{E}$ be a Euclidean $\mathbb{R}$-module. Given
\ \ $f\in\mathbb{E}^{D_{n}}$, there exist unique $X_{0},X_{1},...,X_{n}%
\in\mathbb{E}$ such that
\[
f(d)=X_{0}+dX_{1}+...+d^{n}X_{n}%
\]
for any $d\in D_{n}$
\end{theorem}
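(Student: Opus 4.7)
The plan is to argue by induction on $n$, the base case $n=1$ being precisely the Kock-Lawvere axiom defining a Euclidean $\mathbb{R}$-module. For the inductive step, assume the result on $D_{n-1}$. Given $f\in\mathbb{E}^{D_{n}}$, first restrict $f$ to $D_{n-1}\subseteq D_{n}$ and apply the induction hypothesis to obtain unique $X_{0},\ldots,X_{n-1}\in\mathbb{E}$ with $f(d)=X_{0}+dX_{1}+\cdots+d^{n-1}X_{n-1}$ for every $d\in D_{n-1}$. Form the remainder
\[
g(d)\;=\;f(d)-\sum_{k=0}^{n-1}d^{k}X_{k},\qquad d\in D_{n},
\]
which vanishes on $D_{n-1}$. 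The problem reduces to showing that any $g:D_{n}\to\mathbb{E}$ which vanishes on $D_{n-1}$ is uniquely of the form $g(d)=d^{n}X_{n}$ for some $X_{n}\in\mathbb{E}$.

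This ``division lemma'' is the main obstacle, and I would attack it via the diagonal-sum map $\mu:D^{n}\to D_{n}$, $(d_{1},\ldots,d_{n})\mapsto d_{1}+\cdots+d_{n}$; the codomain is genuinely $D_{n}$ because any monomial of degree $n{+}1$ in $n$ squares-to-zero variables must repeat a factor by pigeonhole and hence vanishes. Iterated application of the Kock-Lawvere axiom to $\mathbb{E}^{D^{n}}$ yields a unique multilinear expansion
\[
(g\circ\mu)(d_{1},\ldots,d_{n})\;=\;\sum_{S\subseteq\{1,\ldots,n\}}\Bigl(\prod_{i\in S}d_{i}\Bigr)Y_{S}.
\]
Setting any single coordinate $d_{j}=0$ places the argument of $\mu$ inside $D_{n-1}$ and so kills $g\circ\mu$; this forces $Y_{S}=0$ for every proper subset $S$. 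Hence $(g\circ\mu)(d_{1},\ldots,d_{n})=d_{1}\cdots d_{n}\,Y$. Since $d=d_{1}+\cdots+d_{n}$ satisfies $d^{n}=n!\,d_{1}\cdots d_{n}$ (all other multinomial contributions having a repeated $d_{i}^{2}=0$ factor), setting $X_{n}:=Y/n!$ gives $g(d)=d^{n}X_{n}$ on the image of $\mu$; one then invokes the fact that $\mu$ is epic, equivalently that $\mathbb{E}^{D_{n}}\hookrightarrow\mathbb{E}^{D^{n}}$ is injective, to promote this identity to all of $D_{n}$.

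Uniqueness of $X_{n}$ drops out of the same analysis: if $d^{n}X_{n}=0$ on $D_{n}$, pulling back by $\mu$ yields $n!\,d_{1}\cdots d_{n}X_{n}=0$ on $D^{n}$, and the uniqueness in the $D^{n}$-expansion forces $X_{n}=0$. The delicate conceptual point throughout is the epic-ness of $\mu$, a standard but not wholly trivial consequence of iterating Kock-Lawvere on $D^{n}$ together with symmetrisation, and which is treated in the opening chapters of \cite{la}. Everything else is multilinear bookkeeping.
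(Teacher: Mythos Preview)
Your argument is essentially correct and is the standard route to this result; however, note that the paper does not actually give a proof of this theorem. It merely attributes the result to Kock \cite{ko} and states it for later use, so there is no ``paper's own proof'' to compare against. What you have written is a faithful sketch of Kock's original argument (also the one reproduced in Lavendhomme \cite{la}): induct on $n$, peel off the lower-order Taylor polynomial, and handle the top-order remainder by pulling back along the addition map $\mu:D^{n}\to D_{n}$ and using iterated Kock--Lawvere on $D^{n}$.

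The one point worth tightening is exactly the one you flagged: the passage from ``$g\circ\mu=(d\mapsto d^{n}X_{n})\circ\mu$'' to ``$g=(d\mapsto d^{n}X_{n})$''. Calling $\mu$ \emph{epic} is a slight overreach in a general well-adapted model; what you actually need, and what you correctly restate parenthetically, is that $\mathbb{E}^{\mu}:\mathbb{E}^{D_{n}}\to\mathbb{E}^{D^{n}}$ is injective for Euclidean $\mathbb{E}$. This follows because the underlying Weil-algebra homomorphism $\mathbb{R}[\epsilon]/(\epsilon^{n+1})\to\mathbb{R}[\epsilon_{1},\ldots,\epsilon_{n}]/(\epsilon_{i}^{2})$, $\epsilon\mapsto\epsilon_{1}+\cdots+\epsilon_{n}$, is injective, and Euclidean modules perceive infinitesimal objects through their Weil algebras. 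With that caveat made precise, your proof is complete.
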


A variant of the above theorem is

\begin{theorem}
\label{t2.2}Let $\mathbb{E}$ be a Euclidean $\mathbb{R}$-module. Given
\ \ $f\in\mathbb{E}^{D_{n}\times D}$, there exist unique $X_{0},X_{1}%
,...,X_{n},Y_{0},Y_{1},...,Y_{n}\in\mathbb{E}$ such that
\[
f(d,e)=\left(  X_{0}+eY_{0}\right)  +d\left(  X_{1}+eY_{1}\right)
+...+d^{n}\left(  X_{n}+eY_{n}\right)
\]
for any $d\in D_{n}$ and any $e\in D$.
\end{theorem}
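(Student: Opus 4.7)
The plan is to apply Theorem~\ref{t2.1} twice, first in the $D$-variable and then in the $D_{n}$-variable, exploiting the exponential (currying) adjunction that is basic in synthetic differential geometry.

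First I would curry $f\in\mathbb{E}^{D_{n}\times D}$ as a map $\widetilde{f}\colon D_{n}\to\mathbb{E}^{D}$, sending $d$ to the function $e\mapsto f(d,e)$. Since $\mathbb{E}$ is Euclidean, so is the function space $\mathbb{E}^{D}$ (this is a standard consequence of the Kock--Lawvere axiom: Euclidean modules are stable under exponentiation by infinitesimal objects). Applying Theorem~\ref{t2.1} to the curried function $e\mapsto f(d,e)\in\mathbb{E}$ (taking $n=1$ there), for each $d\in D_{n}$ there exist unique $A(d),B(d)\in\mathbb{E}$ with
\[
f(d,e)=A(d)+eB(d)\qquad\text{for all }e\in D.
\]
By the uniqueness clause, $A$ and $B$ are well-defined elements of $\mathbb{E}^{D_{n}}$.

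Next I would apply Theorem~\ref{t2.1} to each of $A,B\in\mathbb{E}^{D_{n}}$ separately, producing unique $X_{0},\dots,X_{n}\in\mathbb{E}$ and $Y_{0},\dots,Y_{n}\in\mathbb{E}$ with
\[
A(d)=X_{0}+dX_{1}+\dots+d^{n}X_{n},\qquad B(d)=Y_{0}+dY_{1}+\dots+d^{n}Y_{n}.
\]
Substituting these expansions into $f(d,e)=A(d)+eB(d)$ and collecting powers of $d$ yields exactly the claimed representation
\[
f(d,e)=(X_{0}+eY_{0})+d(X_{1}+eY_{1})+\dots+d^{n}(X_{n}+eY_{n}).
\]

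For uniqueness, suppose two such expansions agree on $D_{n}\times D$. Fixing $d$ and viewing both sides as elements of $\mathbb{E}^{D}$, Theorem~\ref{t2.1} (the $n=1$ case) forces the constant and linear-in-$e$ parts, namely $\sum d^{i}X_{i}$ and $\sum d^{i}Y_{i}$, to coincide with their counterparts. Then a second application of Theorem~\ref{t2.1}, this time in the $D_{n}$-variable, identifies the $X_{i}$'s and the $Y_{i}$'s individually. The only delicate point in the whole argument is the appeal to the Euclidean property of $\mathbb{E}^{D}$ needed to legitimize the first application; once that is granted, everything reduces to invoking Theorem~\ref{t2.1} twice and rearranging.
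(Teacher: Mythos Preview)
Your argument is correct and follows the same underlying idea as the paper's proof: use the exponential law together with Theorem~\ref{t2.1}. The only difference is the order of currying. The paper writes $\mathbb{E}^{D_{n}\times D}=(\mathbb{E}^{D_{n}})^{D}$, identifies $\mathbb{E}^{D_{n}}$ with $\mathbb{E}^{n+1}$ via Theorem~\ref{t2.1}, and then invokes the Euclidean property of the \emph{finite power} $\mathbb{E}^{n+1}$ to expand in $e$. You instead expand in $e$ first and then in $d$. Both routes work, but note that the ``delicate point'' you flag is not actually needed: your first step only uses that $\mathbb{E}$ itself is Euclidean (Theorem~\ref{t2.1} with $n=1$ applied to $e\mapsto f(d,e)$), not that $\mathbb{E}^{D}$ is. The paper's ordering makes this transparent, since the Euclidean property of $\mathbb{E}^{n+1}$ is immediate from that of $\mathbb{E}$.
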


\begin{proof}
The exponential law ensures that
\[
\mathbb{E}^{D_{n}\times D}=\left(  \mathbb{E}^{D_{n}}\right)  ^{D}%
\]
Thanks to Theorem \ref{t2.1}, we are allowed to think that
\[
\mathbb{E}^{D_{n}}=\mathbb{E}^{n+1}%
\]
Since $\mathbb{E}$ is Euclidean, $\mathbb{E}^{n+1}$\ is also Euclidean.
Therefore we have the desired result.
\end{proof}

We recall that

\begin{definition}
A Lie group is a group which is microlinear as a space.
\end{definition}

Similarly, we should be precise in saying a ''Lie algebra''

\begin{definition}
A Lie algebra is a Euclidean $\mathbb{R}$-module endowed with a bilinear
binary operation $\left[  ,\right]  $ (called a Lie bracket) abiding by the
antisymmetric law and the Jacobi identity which is microlinear as a mere space.
\end{definition}

The unit element of a group is usually denoted by $1$, while the unit element
of the underlying abelian group of an $\mathbb{R}$-module is usually denoted
by $0$. Given a Lie group $G$\ and a space $U$, $G^{U}$ is naturally a Lie
group. Similarly, given a Lie algebra $\mathfrak{g}$\ and a space $U$,
$\mathfrak{g}^{U}$ is naturally a Lie algebra.

\begin{notation}
We make it a rule that abbreviated Lie brackets should be inserted from the
right to the left. Therefore
\[
\left[  X,Y,Z\right]
\]
stands for
\[
\left[  X,\left[  Y,Z\right]  \right]
\]
by way of example.
\end{notation}

\section{\label{s3}Weil Prolongations of Lie Groups and their Lie Algebras}

As is the case in the equivalence of the three distinct viewpoints of vector
fields (cf. \S 3.2.1 of \cite{la}), the exponential law plays a significant
role in synthetic differential geometry, which is also the case in the
considerations to follow. Since each Weil algebra has its counterpart in an
adequate model of synthetic differential geometry (called an
\textit{infinitesimal object}), Weil prolongations are merely exponentiations
by infinitesimal objects in synthetic differential geometry. It is not
difficult to externalize Weil prolongations, for which the reader is referred,
e.g., to Chapter VIII of \cite{kms} or \S 31 of \cite{km}. Weil prolongations
play a significant role in axiomatic differential geometry under construction,
for which the reader is referred to \cite{ni1} and \cite{ni2}.

Let us begin by fixing our notation.

\begin{notation}
We denote by
\[
\mathbf{Lie}%
\]
the functor assigning to each Lie group $G$\ its Lie algebra $\mathbf{Lie}%
\left(  G\right)  $\ and to each homomorphism $\varphi:G\rightarrow G^{\prime
}$\ of Lie groups to its induced homomorphism $\mathbf{Lie}\left(
\varphi\right)  :\mathbf{Lie}\left(  G\right)  \rightarrow\mathbf{Lie}\left(
G^{\prime}\right)  $\ of their Lie algebras. We will often write
$\mathfrak{g}$\ for $\mathbf{Lie}\left(  G\right)  $, as is usual.
\end{notation}

\begin{notation}
Given a Lie group $G$, we denote by
\[
\left(  G^{D_{n}}\right)  _{1}%
\]
the subgroup
\[
\left\{  f\in G^{D_{n}}\mid f\left(  0\right)  =1\right\}
\]
of the Lie group $G^{D_{n}}$.
\end{notation}

\begin{notation}
Given a Lie algebra $\mathfrak{g}$, we denote by
\[
\left(  \mathfrak{g}^{D_{n}}\right)  _{0}%
\]
the subalgebra
\[
\left\{  f\in\mathfrak{g}^{D_{n}}\mid f\left(  0\right)  =0\right\}
\]
of the Lie algebra $\mathfrak{g}^{D_{n}}$.
\end{notation}

\begin{theorem}
\label{t3.1}Given a Lie group $G$\ with its Lie algebra $\mathfrak{g}$, we
have
\[
\mathbf{Lie}\left(  G^{D_{n}}\right)  =\mathfrak{g}^{D_{n}}%
\]

\end{theorem}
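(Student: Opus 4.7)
The strategy is to apply the exponential law to exhibit the claimed equality at the level of underlying spaces and then verify that both the $\mathbb{R}$-module structure and the Lie bracket transport correctly.

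In SDG, the Lie algebra of a Lie group $H$ is realized as the subspace $\{t\in H^{D}\mid t(0)=1\}$ of $H^{D}$, equipped with the $\mathbb{R}$-module structure induced by microlinearity and the bracket derived from the group commutator. Applied to $H=G^{D_{n}}$, an element of $\mathbf{Lie}(G^{D_{n}})$ is therefore a map $t:D\to G^{D_{n}}$ with $t(0)(e)=1$ for every $e\in D_{n}$. The exponential law yields
\[
(G^{D_{n}})^{D}\;=\;G^{D_{n}\times D}\;=\;(G^{D})^{D_{n}},
\]
and under this identification $t$ corresponds to $\tilde{t}:D_{n}\to G^{D}$ satisfying $\tilde{t}(e)(0)=1$ for every $e\in D_{n}$. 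Such a $\tilde{t}$ is exactly a map $D_{n}\to\mathfrak{g}$, i.e.\ an element of $\mathfrak{g}^{D_{n}}$, which establishes the bijection at the set-theoretic level.

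Next, the plan is to show that the algebraic structures match. The Lie-group operations on $G^{D_{n}}$ are defined pointwise in $D_{n}$ (by construction of the functor $(-)^{D_{n}}$ on Lie groups), and consequently so are the induced addition, scalar multiplication and group commutator on the tangent space at the identity. Since $\mathfrak{g}^{D_{n}}$ also inherits its $\mathbb{R}$-module and Lie-bracket structure pointwise from $\mathfrak{g}$, the two structures coincide under the bijection above.

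The main subtlety lies with the Lie bracket, whose definition in SDG passes through an infinitesimal construction on $D\times D$ rather than being a raw pointwise formula. However, this is a pure naturality consideration: $\mathbf{Lie}(-)$ is a functor, and for each fixed $e\in D_{n}$ the evaluation map $\mathrm{ev}_{e}:G^{D_{n}}\to G$ is a homomorphism of Lie groups whose derivative $\mathbf{Lie}(\mathrm{ev}_{e})$ is the corresponding evaluation on tangent vectors. Hence the bracket computed inside $\mathbf{Lie}(G^{D_{n}})$, when evaluated at each $e$, reduces to the bracket inside $\mathfrak{g}$, which is precisely the pointwise bracket defining $\mathfrak{g}^{D_{n}}$. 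This pins down the equality as one of Lie algebras, not merely of underlying spaces.
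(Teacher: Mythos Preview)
Your proof is correct and follows the same core approach as the paper, namely the exponential law $(G^{D_{n}})^{D}=G^{D_{n}\times D}=(G^{D})^{D_{n}}$ restricted to the identity. The paper's proof stops at the identification of underlying spaces and leaves the compatibility of the Lie-algebra structures implicit; your additional paragraph, using functoriality of $\mathbf{Lie}$ applied to the evaluation homomorphisms $\mathrm{ev}_{e}:G^{D_{n}}\to G$, makes explicit why the bracket also transports correctly, which is a welcome clarification rather than a different route.
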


\begin{proof}
This follows mainly from the familiar exponential law
\[
\left(  G^{D_{n}}\right)  ^{D}=G^{D_{n}\times D}=\left(  G^{D}\right)
^{D_{n}}%
\]
which naturally gives rise, by restriction, to
\begin{align*}
& \left(  \left(  G^{D_{n}}\right)  ^{D}\right)  _{1}\\
& =\left\{  f\in G^{D_{n}\times D}\mid f\left(  d,0\right)  =1\;\left(
\forall d\in D_{n}\right)  \right\} \\
& =\mathfrak{g}^{D_{n}}%
\end{align*}

\end{proof}

\begin{corollary}
\label{t3.1.1}
\[
\mathbf{Lie}\left(  \left(  G^{D_{n}}\right)  _{1}\right)  =\left(
\mathfrak{g}^{D_{n}}\right)  _{0}%
\]

\end{corollary}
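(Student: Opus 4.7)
The plan is to mimic the proof of Theorem~\ref{t3.1}, simply adjoining the subgroup/subalgebra condition on both sides. By definition, $\mathbf{Lie}\left((G^{D_n})_1\right)$ is carved out of $\left((G^{D_n})_1\right)^D$ as the set of those $F$ with $F(0)$ equal to the unit of $(G^{D_n})_1$, namely the constant function $d \mapsto 1$. I would externalize such an $F$ via the exponential law as a map $\tilde{F} \colon D_n \times D \to G$ and record the two independent constraints it must satisfy.

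The first constraint, $F(0) = 1_{(G^{D_n})_1}$, reads $\tilde{F}(d,0) = 1$ for every $d \in D_n$; this is precisely the condition appearing in the proof of Theorem~\ref{t3.1} that identifies $\mathbf{Lie}(G^{D_n})$ with $\mathfrak{g}^{D_n}$ via the assignment $d \mapsto \tilde{F}(d,\cdot)$. The second constraint, that each $F(e)$ lies in $(G^{D_n})_1$, reads $\tilde{F}(0,e) = 1$ for every $e \in D$, which says exactly that the tangent vector $\tilde{F}(0,\cdot) \in \mathfrak{g}$ is the zero vector. Under the identification supplied by Theorem~\ref{t3.1}, this translates to the statement that the corresponding element of $\mathfrak{g}^{D_n}$ evaluates to $0$ at $0 \in D_n$, i.e.\ lies in $(\mathfrak{g}^{D_n})_0$.

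This essentially completes the argument, so there is no serious obstacle; the only care required is in distinguishing the two ambient identities (the $1$ in $G$ versus the unit of $(G^{D_n})_1$) and in checking that the passage from subgroup to subalgebra truly restricts along the inclusion $(G^{D_n})_1 \hookrightarrow G^{D_n}$. A cleaner way to package the same calculation is to recognize $(G^{D_n})_1$ as the kernel of the evaluation homomorphism $\mathrm{ev}_0 \colon G^{D_n} \to G$, observe that $\mathbf{Lie}(\mathrm{ev}_0)$ is the analogous evaluation $\mathfrak{g}^{D_n} \to \mathfrak{g}$, whose kernel is by definition $(\mathfrak{g}^{D_n})_0$, and conclude by the compatibility of $\mathbf{Lie}$ with such equalizers in the microlinear setting.
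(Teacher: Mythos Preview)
Your proof is correct and follows essentially the same route as the paper: both use the exponential law $(G^{D_n})^D = G^{D_n\times D}$ and then record the two constraints $\tilde F(d,0)=1$ for all $d\in D_n$ and $\tilde F(0,e)=1$ for all $e\in D$, identifying the resulting set with $(\mathfrak{g}^{D_n})_0$. Your additional remark repackaging this via the kernel of the evaluation homomorphism is a pleasant extra that the paper does not include, but it is not a genuinely different argument.
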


\begin{proof}
This follows mainly from the familiar exponential law
\[
\left(  G^{D_{n}}\right)  ^{D}=G^{D_{n}\times D}=\left(  G^{D}\right)
^{D_{n}}%
\]
which naturally gives rise, by restriction, to
\begin{align*}
& \left(  \left(  \left(  G^{D_{n}}\right)  _{1}\right)  ^{D}\right)
_{1^{D_{n}}}\\
& =\left\{  f\in G^{D_{n}\times D}\mid f\left(  d,0\right)  =1\;\left(
\forall d\in D_{n}\right)  \text{ and }f(0,e)=1\;\left(  \forall e\in
D\right)  \right\} \\
& =\left(  \mathfrak{g}^{D_{n}}\right)  _{0}%
\end{align*}

\end{proof}

\begin{corollary}
\label{t3.1.2}Given $\sum_{i=0}^{n}X_{i}d^{i},\sum_{j=0}^{n}Y_{j}d^{j}%
\in\mathfrak{g}^{D_{n}}=\mathbf{Lie}\left(  G^{D_{n}}\right)  $ with $d\in
D_{n}$, we can easily compute their Lie bracket as follows:
\[
\left[  \sum_{i=0}^{n}X_{i}d^{i},\sum_{j=0}^{n}Y_{j}d^{j}\right]  =\sum
_{k=0}^{n}\left(  \sum_{i+j=k}\left[  X_{i},Y_{j}\right]  \right)  d^{k}%
\]

\end{corollary}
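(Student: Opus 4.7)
The plan is to reduce the claim to a direct computation resting on three ingredients: (i) the Lie bracket on $\mathfrak{g}^{D_{n}}$ is pointwise in $d\in D_{n}$, (ii) the bracket $[\,,\,]$ on $\mathfrak{g}$ is bilinear, and (iii) every $d\in D_{n}$ satisfies $d^{n+1}=0$. The first point is the conceptual heart: the group operations on $G^{D_{n}}$ are defined pointwise in $D_{n}$, so the induced Lie bracket on $\mathbf{Lie}(G^{D_{n}})=\mathfrak{g}^{D_{n}}$ coincides with the pointwise bracket of $\mathfrak{g}$-valued functions on $D_{n}$, consistent with the remark after the definition of a Lie algebra that $\mathfrak{g}^{U}$ is naturally a Lie algebra for any space $U$.

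Granted (i), for each $d\in D_{n}$ one simply computes in $\mathfrak{g}$:
\[
\left[\sum_{i=0}^{n}X_{i}d^{i},\sum_{j=0}^{n}Y_{j}d^{j}\right] = \sum_{i,j=0}^{n} d^{i+j}\left[X_{i},Y_{j}\right]
\]
by bilinearity. Since $d^{k}=0$ for $k\geq n+1$, all summands with $i+j\geq n+1$ vanish, and the right-hand side collapses to $\sum_{k=0}^{n}d^{k}\bigl(\sum_{i+j=k}[X_{i},Y_{j}]\bigr)$, which is exactly the asserted formula. The uniqueness clause in Theorem \ref{t2.1} further certifies that the coefficients $\sum_{i+j=k}[X_{i},Y_{j}]\in\mathfrak{g}$ are unambiguously determined.

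The only delicate point is justifying (i) --- namely, that under the identification of Theorem \ref{t3.1} the intrinsic Lie bracket on $\mathbf{Lie}(G^{D_{n}})$ transports to the pointwise bracket on $\mathfrak{g}^{D_{n}}$. This is really bookkeeping rather than an obstacle: every operation entering the definition of the bracket on $\mathbf{Lie}(G^{D_{n}})$ --- group multiplication, inversion, and differentiation along the auxiliary $D$-direction used in the identification $(G^{D_{n}\times D})_{1}=\mathfrak{g}^{D_{n}}$ --- is performed pointwise in $D_{n}$, and therefore commutes with evaluation at each fixed $d\in D_{n}$. Once this is noted, the computation above is purely formal and the corollary drops out.
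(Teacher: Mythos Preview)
Your proof is correct. In fact the paper states this corollary without proof, treating it as immediate from the identification $\mathbf{Lie}(G^{D_{n}})=\mathfrak{g}^{D_{n}}$ of Theorem~\ref{t3.1}; your argument supplies exactly the expected justification --- pointwise bracket plus bilinearity plus $d^{n+1}=0$ --- and is presumably what the author has in mind.
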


\section{\label{s4}Generalized Baker-Campbell-Hausdorff Formulas}

In this section, $G$ is assumed to be a regular Lie group with its Lie algebra
$\mathfrak{g}$. It should be obvious that

\begin{theorem}
\label{t4.1}With $d_{1}\in D$\ and $X_{1},Y_{1}\in\mathfrak{g}$, we have
\begin{align*}
& \exp\,d_{1}X_{1}.\exp\,d_{1}Y_{1}\\
& =\exp\,d_{1}\left(  X_{1}+Y_{1}\right)
\end{align*}

\end{theorem}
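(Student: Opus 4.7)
The plan is to reduce the identity to the fundamental fact that, under the canonical identification $\mathfrak{g} = T_{1}G$, the group multiplication on $G$ restricts at the identity to abelian group addition on $\mathfrak{g}$. Because $d_{1}\in D$ satisfies $d_{1}^{2}=0$, no second-order (commutator-type) correction can appear, and only the linear part of the product survives.

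First I would consider the function $f\colon D\to G$ given by
\[
f(d_{1}) = \exp\,d_{1}X_{1}\cdot \exp\,d_{1}Y_{1}.
\]
Since $f(0)=1\cdot 1=1$, the Kock--Lawvere axiom (Theorem \ref{t2.1} with $n=1$, applied in a microlinear neighbourhood of $1$) says that $f$ is represented by a unique tangent vector $Z\in\mathfrak{g}$, i.e.\ $f(d_{1})=\exp\,d_{1}Z$. The claim therefore reduces to the identification $Z=X_{1}+Y_{1}$.

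Next I would invoke the synthetic description of addition on $\mathfrak{g}$. The $\mathbb{R}$-module structure on $T_{1}G$ is precisely the one induced by the group law $\mu\colon G\times G\to G$: differentiating $\mu$ at $(1,1)$ and using the canonical decomposition $T_{(1,1)}(G\times G)=\mathfrak{g}\oplus\mathfrak{g}$, one obtains the map $(X,Y)\mapsto X+Y$. Applying this to the pair of tangent vectors $d_{1}\mapsto \exp\,d_{1}X_{1}$ and $d_{1}\mapsto \exp\,d_{1}Y_{1}$ yields exactly $\exp\,d_{1}(X_{1}+Y_{1})$, which is what we want.

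The only real (and minor) obstacle is bookkeeping: making sure that the addition on $\mathfrak{g}$ defined via the $\mathbb{R}$-module structure coincides with the addition induced by $\mu$ on $T_{1}G$. This is the synthetic counterpart of the classical statement that the differential of the group law at the identity is ordinary addition, and in the microlinear setting it follows by unwinding definitions together with the exponential law $G^{D\times D}=(G^{D})^{D}$.
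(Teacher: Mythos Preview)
The paper does not actually prove this theorem; it is introduced with the phrase ``It should be obvious that'' and left without argument. Your proposal is a correct way of supplying the omitted details, and it identifies the right mechanism: since $d_{1}^{2}=0$, only the first-order behaviour of the group multiplication near $1$ matters, and that first-order behaviour is precisely addition on $\mathfrak{g}=T_{1}G$.

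If you want to make the final ``bookkeeping'' step fully synthetic rather than phrasing it via the classical differential of $\mu$, recall how addition of tangent vectors is defined in a microlinear space: given $t_{1},t_{2}\in T_{1}G$, microlinearity yields a unique $\tau\colon D(2)\to G$ with $\tau(d,0)=t_{1}(d)$ and $\tau(0,d)=t_{2}(d)$, and one sets $(t_{1}+t_{2})(d)=\tau(d,d)$. For a group, the map $(d_{1},d_{2})\mapsto t_{1}(d_{1})\cdot t_{2}(d_{2})$ restricted to $D(2)$ satisfies these boundary conditions, so by uniqueness $(t_{1}+t_{2})(d)=t_{1}(d)\cdot t_{2}(d)$. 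Taking $t_{1}(d)=\exp\,dX_{1}$ and $t_{2}(d)=\exp\,dY_{1}$ gives the statement directly, without any appeal to Theorem~\ref{t2.1} in a ``microlinear neighbourhood''.
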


\begin{theorem}
\label{t4.2}With $d_{1},d_{2}\in D$\ and $X_{1},X_{2},Y_{1},Y_{2}%
\in\mathfrak{g} $, we have
\begin{align*}
& \exp\,\left(  d_{1}+d_{2}\right)  X_{1}+\frac{1}{2}\left(  d_{1}%
+d_{2}\right)  ^{2}X_{2}.\exp\,\left(  d_{1}+d_{2}\right)  Y_{1}+\frac{1}%
{2}\left(  d_{1}+d_{2}\right)  ^{2}Y_{2}\\
& =\exp\,\left(  d_{1}+d_{2}\right)  \left(  X_{1}+Y_{1}\right)  +\frac{1}%
{2}\left(  d_{1}+d_{2}\right)  ^{2}\left(  X_{2}+Y_{2}+\left[  X_{1}%
,Y_{1}\right]  \right)
\end{align*}

\end{theorem}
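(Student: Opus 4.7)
The plan is to reduce the statement to a single-variable second-order Baker--Campbell--Hausdorff identity on $D_{2}$, essentially the one established in the concluding two sections of \cite{ni}. The key observation is that for $d_{1},d_{2}\in D$ the sum $d:=d_{1}+d_{2}$ automatically lies in $D_{2}$, since $(d_{1}+d_{2})^{3}=3d_{1}^{2}d_{2}+3d_{1}d_{2}^{2}=0$. Under this substitution the target identity becomes
\[
\exp\bigl(dX_{1}+\tfrac{1}{2}d^{2}X_{2}\bigr)\cdot\exp\bigl(dY_{1}+\tfrac{1}{2}d^{2}Y_{2}\bigr)=\exp\bigl(d(X_{1}+Y_{1})+\tfrac{1}{2}d^{2}(X_{2}+Y_{2}+[X_{1},Y_{1}])\bigr),
\]
so it suffices to verify this single-parameter identity for every $d\in D_{2}$ and then specialise to $d=d_{1}+d_{2}$.

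Next I would invoke the $D_{2}$-form of BCH from \cite{ni}: applied to $A=dX_{1}+\tfrac{1}{2}d^{2}X_{2}$ and $B=dY_{1}+\tfrac{1}{2}d^{2}Y_{2}$, it delivers $\exp A\cdot\exp B=\exp\bigl(A+B+\tfrac{1}{2}[A,B]\bigr)$, all higher BCH terms carrying a factor $d^{3}=0$. The bracket $[A,B]$ simplifies to $d^{2}[X_{1},Y_{1}]$ for the same reason, every other cross-bracket picking up at least one factor of $d^{3}$. Assembling these pieces reproduces the displayed right-hand side.

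The main obstacle I anticipate is bookkeeping on the degree-two coefficient: one must check that no spurious brackets such as $[X_{1},X_{2}]$, $[X_{2},Y_{1}]$, or $[Y_{1},Y_{2}]$ survive. If quoting \cite{ni} wholesale feels heavy, a self-contained fallback is to apply Theorem \ref{t2.2} directly to the two sides viewed as $G$-valued functions of $(d_{1},d_{2})\in D_{n}\times D$ (after reshuffling variables). The constant, $d_{1}$, $d_{2}$, $d_{1}^{2}$, and $d_{2}^{2}$ coefficients are all pinned down by the trivial first-order case (Theorem \ref{t4.1}) via the substitutions $d_{2}=0$ and $d_{1}=0$, while the genuinely new mixed $d_{1}d_{2}$ coefficient is where $[X_{1},Y_{1}]$ enters, reflecting the very infinitesimal definition of the Lie bracket of $\mathbf{Lie}(G)$. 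With the uniqueness clauses of Theorems \ref{t2.1}--\ref{t2.2} in hand, comparing these finitely many coefficients closes the argument.
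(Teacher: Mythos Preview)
Your primary approach is sound and the arithmetic checks out: with $d:=d_{1}+d_{2}\in D_{2}$ one has $[A,B]=d^{2}[X_{1},Y_{1}]$ and every higher BCH commutator carries a factor $d^{3}=0$, so the truncated series $A+B+\tfrac{1}{2}[A,B]$ gives exactly the claimed exponent. This is, however, not the route the paper takes. The paper peels off the $d_{2}$-linear part of each exponent one step at a time via right and left logarithmic derivations (Lemmas \ref{t4.2.1}--\ref{t4.2.4}), collapses the inner product $\exp d_{1}X_{1}\cdot\exp d_{1}Y_{1}$ using Theorem \ref{t4.1}, and then reassembles. That decompose--reduce--reassemble template is precisely the engine behind the much longer proof of Theorem \ref{t4.3}, so the paper is setting up an inductive machine rather than invoking a single BCH identity. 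Your shortcut is cleaner for this particular statement, but it leans on \cite{ni} supplying BCH in enough generality to accept exponents of the form $dX_{1}+\tfrac{1}{2}d^{2}X_{2}$ and not merely $dX$; since the paper explicitly bills Theorem \ref{t4.2} as a ``slight generalization'' of what \cite{ni} contains, you should confirm that the citation really covers this case before resting on it. Your fallback via Theorems \ref{t2.1}--\ref{t2.2} is shakier as written: those results concern maps into a Euclidean $\mathbb{R}$-module, whereas both sides here are $G$-valued, so a direct coefficient comparison is not available without further argument.
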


\begin{proof}
We have
\begin{align*}
& \exp\,\left(  d_{1}+d_{2}\right)  X_{1}+\frac{1}{2}\left(  d_{1}%
+d_{2}\right)  ^{2}X_{2}.\exp\,\left(  d_{1}+d_{2}\right)  Y_{1}+\frac{1}%
{2}\left(  d_{1}+d_{2}\right)  ^{2}Y_{2}\\
& =\exp\,d_{1}X_{1}+d_{2}\left(  X_{1}+d_{1}X_{2}\right)  .\exp\,d_{1}%
Y_{1}+d_{2}\left(  Y_{1}+d_{1}Y_{2}\right) \\
& =\exp\,d_{2}\left(  X_{1}+d_{1}X_{2}\right)  .\exp\,d_{1}X_{1}.\exp
\,d_{1}Y_{1}.\exp\,d_{2}\left(  Y_{1}+d_{1}Y_{2}\right) \\
& \left)  \text{By Lemmas \ref{t4.2.1} and \ref{t4.2.2}}\right( \\
& =\exp\,d_{2}\left(  X_{1}+d_{1}X_{2}\right)  .\exp\,d_{1}\left(  X_{1}%
+Y_{1}\right)  .\exp\,d_{2}\left(  Y_{1}+d_{1}Y_{2}\right) \\
& \left)  \text{By Theorem \ref{t4.1}}\right( \\
& =\exp\,-\frac{1}{2}d_{1}d_{2}\left[  Y_{1},X_{1}\right]  .\exp\,d_{1}\left(
X_{1}+Y_{1}\right)  +d_{2}\left(  X_{1}+d_{1}X_{2}\right)  .\exp\,d_{2}\left(
Y_{1}+d_{1}Y_{2}\right) \\
& \left)  \text{By Lemma \ref{t4.2.3}}\right( \\
& =\exp\,-\frac{1}{2}d_{1}d_{2}\left[  Y_{1},X_{1}\right]  .\exp\,d_{1}\left(
X_{1}+Y_{1}\right)  +d_{2}\left(  X_{1}+d_{1}X_{2}\right)  +d_{2}\left(
Y_{1}+d_{1}Y_{2}\right)  .\\
& \exp\,\frac{1}{2}d_{1}d_{2}\left[  X_{1},Y_{1}\right] \\
& \left)  \text{By Lemma \ref{t4.2.4}}\right(
\end{align*}
so that we have the desired formula.
\end{proof}

\begin{lemma}
\label{t4.2.1}
\begin{align*}
& \exp\,d_{1}X_{1}+d_{2}\left(  X_{1}+d_{1}X_{2}\right) \\
& =\exp\,d_{2}\left(  X_{1}+d_{1}X_{2}\right)  .\exp\,d_{1}X_{1}%
\end{align*}

\end{lemma}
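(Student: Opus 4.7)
The plan is to reduce the identity to an instance of the infinitesimal Baker--Campbell--Hausdorff formula developed in \cite{ni}, in which the bracket correction happens to vanish.

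Setting $A = d_{1}X_{1}$ and $B = d_{2}(X_{1}+d_{1}X_{2}) = d_{2}X_{1}+d_{1}d_{2}X_{2}$, the claim takes the form $\exp\,(A+B) = \exp\,B.\exp\,A$. By bilinearity of the Lie bracket,
\[
[A,B] = d_{1}d_{2}\,[X_{1},X_{1}] + d_{1}^{2}d_{2}\,[X_{1},X_{2}],
\]
and both summands vanish: the first by antisymmetry $[X_{1},X_{1}]=0$, the second because $d_{1}\in D$ gives $d_{1}^{2}=0$. Any iterated bracket of $A$ with $B$ acquires a further factor of $d_{1}$ or $d_{2}$ and so vanishes for the same nilpotency reason.

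With $[A,B]=0$ in hand, I would then invoke the synthetic BCH identity $\exp\,A.\exp\,B = \exp\,(A+B+\frac{1}{2}[A,B])$ from the concluding portion of \cite{ni}, which collapses to $\exp\,A.\exp\,B = \exp\,(A+B) = \exp\,B.\exp\,A$, yielding the lemma.

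The main obstacle is simply to identify the correct BCH statement in \cite{ni} to cite; once invoked, the arithmetic is trivial because antisymmetry together with $d_{1}^{2}=0$ annihilates every correction term on sight, so no genuine computation remains beyond expanding the bracket.
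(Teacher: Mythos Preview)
Your argument is correct and matches the paper's in substance: both rest on the observation that $[d_1 X_1,\, X_1 + d_1 X_2] = 0$ (via antisymmetry and $d_1^{2}=0$), so all correction terms drop out. The paper's phrasing differs only cosmetically---it sets $\ast_1 = d_1 X_1$, $\ast_2 = X_1 + d_1 X_2$ and invokes the \emph{right logarithmic derivation} formula $\exp(\ast_1 + d_2\ast_2) = \exp(d_2\ast_2).\exp(\ast_1)$ from \cite{ni} rather than the BCH product identity you cite, but this is the same tool in a different guise.
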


\begin{proof}
Letting $\ast_{1}$ denote
\[
d_{1}X_{1}%
\]
and letting $\ast_{2}$ denote
\[
X_{1}+d_{1}X_{2}\text{,}%
\]
we have
\begin{align}
& \exp\,\ast_{1}+d_{2}\ast_{2}\nonumber\\
& =\exp\,d_{2}\ast_{2}.\exp\,\ast_{1}\label{4.2.1.1}%
\end{align}
by right logarithmic derivation. Therefore the desired formula follows.
\end{proof}

\begin{lemma}
\label{t4.2.2}
\begin{align*}
& \exp\,d_{1}Y_{1}+d_{2}\left(  Y_{1}+d_{1}Y_{2}\right) \\
& =\exp\,d_{1}Y_{1}.\exp\,d_{2}\left(  Y_{1}+d_{1}Y_{2}\right)
\end{align*}

\end{lemma}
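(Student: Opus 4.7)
The plan is to imitate the proof of Lemma \ref{t4.2.1} almost verbatim, substituting \emph{left} logarithmic derivation for \emph{right} logarithmic derivation. That single change reverses the order of the two factors on the right-hand side of the resulting identity, which is precisely the difference between Lemma \ref{t4.2.1} and the present statement.

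Concretely, I would first introduce the abbreviations $\star_{1}:=d_{1}Y_{1}$ and $\star_{2}:=Y_{1}+d_{1}Y_{2}$, so that the identity to be proved reads
\[
\exp\,\star_{1}+d_{2}\star_{2}=\exp\,\star_{1}.\exp\,d_{2}\star_{2}.
\]
The key step is then to invoke the general identity
\[
\exp\,Z+dW=\exp\,Z.\exp\,dW\qquad(Z,W\in\mathfrak{g},\ d\in D)
\]
valid in the regular Lie group $G$, obtained by left logarithmic derivation of the one-parameter family $d\mapsto\exp(Z+dW)$. Substituting $Z=\star_{1}$, $W=\star_{2}$, $d=d_{2}$ yields the conclusion immediately.

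I anticipate no serious obstacle here. The only delicate point is conceptual rather than computational: one must be comfortable that left logarithmic derivation produces the factorisation in the order $\exp Z.\exp dW$, dual to the order $\exp dW.\exp Z$ supplied by the right version in Lemma \ref{t4.2.1}. Since the author has already appealed to the right-hand version without further justification, the left-hand analogue is equally unproblematic in the regular setting of \S\ref{s4}, and no additional computation beyond the substitution is required.
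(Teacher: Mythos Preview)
Your approach mirrors the paper's proof exactly: introduce $\ast_{1}=d_{1}Y_{1}$, $\ast_{2}=Y_{1}+d_{1}Y_{2}$, and appeal to left logarithmic derivation to obtain $\exp(\ast_{1}+d_{2}\ast_{2})=\exp\ast_{1}.\exp d_{2}\ast_{2}$.

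One correction is needed, however. The identity
\[
\exp(Z+dW)=\exp Z.\exp dW\qquad(Z,W\in\mathfrak{g},\ d\in D)
\]
is \emph{not} valid in general. Left logarithmic derivation produces correction terms inside the second exponential, namely $\ast_{2}-\tfrac{1}{2}[\ast_{1},\ast_{2}]+\tfrac{1}{6}[\ast_{1},\ast_{1},\ast_{2}]-\cdots$; compare Lemmas~\ref{t4.2.4}, \ref{t4.3.2}, and~\ref{t4.3.5}, where such terms appear explicitly. The reason no correction is visible here is specific to the present data: since $d_{1}\in D$ one has
\[
[\star_{1},\star_{2}]=[d_{1}Y_{1},\,Y_{1}+d_{1}Y_{2}]=d_{1}[Y_{1},Y_{1}]+d_{1}^{2}[Y_{1},Y_{2}]=0,
\]
and all higher iterated brackets vanish \textit{a fortiori}. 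The paper's proof relies on exactly the same vanishing (tacitly); your argument becomes correct once you replace the appeal to a nonexistent ``general identity'' with this observation.
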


\begin{proof}
Letting $\ast_{1}$ denote
\[
d_{1}Y_{1}%
\]
and letting $\ast_{2}$ denote
\[
Y_{1}+d_{1}Y_{2}\text{,}%
\]
we have
\begin{align}
& \exp\,\ast_{1}+d_{2}\ast_{2}\nonumber\\
& =\exp\,\ast_{1}.\exp\,d_{2}\ast_{2}\label{4.2.2.1}%
\end{align}
by left logarithmic derivation. Therefore the desired formula follows.
\end{proof}

\begin{lemma}
\label{t4.2.3}We have
\begin{align*}
& \exp\,d_{1}\left(  X_{1}+Y_{1}\right)  +d_{2}\left(  X_{1}+d_{1}X_{2}\right)
\\
& =\exp\,d_{2}\left(  X_{1}+d_{1}X_{2}+\frac{1}{2}d_{1}\left[  Y_{1}%
,X_{1}\right]  \right)  .\exp\,d_{1}\left(  X_{1}+Y_{1}\right)
\end{align*}

\end{lemma}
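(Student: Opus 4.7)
The plan is to mirror the strategy of Lemmas \ref{t4.2.1} and \ref{t4.2.2}, but to account for the fact that the two ``starred'' quantities $X_1 + Y_1$ and $X_1 + d_1 X_2$ no longer commute modulo $d_1^2 = 0$. Indeed, a short computation gives
\[
d_1[X_1 + Y_1, X_1 + d_1 X_2] = d_1[Y_1, X_1],
\]
so a first-order Baker-Campbell-Hausdorff correction term is forced on us, and the extra $\tfrac{1}{2}d_1[Y_1, X_1]$ appearing in the statement of the lemma is precisely what compensates for it.

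More concretely, I would invoke the infinitesimal Baker-Campbell-Hausdorff formula established in the final sections of \cite{ni}: whenever $P$ and $Q$ are Lie-algebra elements each carrying a factor from $D$, all iterated brackets of length $\ge 3$ automatically vanish because every surviving monomial would require some $d_i^2$, and the formula collapses to
\[
\exp(P).\exp(Q) = \exp\bigl(P + Q + \tfrac{1}{2}[P, Q]\bigr).
\]
Setting
\[
P = d_2\bigl(X_1 + d_1 X_2 + \tfrac{1}{2}d_1[Y_1, X_1]\bigr), \qquad Q = d_1(X_1 + Y_1),
\]
the pieces of $P$ proportional to $d_1$ all drop out of $[P, Q]$ thanks to $d_1^2 = 0$, leaving
\[
[P, Q] = d_1 d_2\,[X_1, X_1 + Y_1] = d_1 d_2\,[X_1, Y_1] = -d_1 d_2\,[Y_1, X_1].
\]

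Assembling $P + Q + \tfrac{1}{2}[P, Q]$ then yields
\[
d_1(X_1 + Y_1) + d_2(X_1 + d_1 X_2) + \tfrac{1}{2}d_1 d_2[Y_1, X_1] - \tfrac{1}{2}d_1 d_2[Y_1, X_1],
\]
and the last two terms cancel, giving exactly the exponent on the left-hand side of the lemma. The main obstacle, as in the preceding lemmas, is not the computation but the legitimacy of the infinitesimal BCH formula in this synthetic setting; once that is granted from \cite{ni}, the verification is a bookkeeping exercise in the nilpotent ring generated by $d_1$ and $d_2$.
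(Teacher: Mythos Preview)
Your proof is correct and rests on the same machinery from \cite{ni} as the paper's, but the two arguments run in opposite directions. The paper sets $\ast_{1}=d_{1}(X_{1}+Y_{1})$ and $\ast_{2}=X_{1}+d_{1}X_{2}$ and invokes \emph{right logarithmic derivation} to split the left-hand side,
\[
\exp(\ast_{1}+d_{2}\ast_{2})=\exp\bigl(d_{2}\{\ast_{2}+\tfrac{1}{2}[\ast_{1},\ast_{2}]\}\bigr).\exp\ast_{1},
\]
after which the single bracket computation $[\ast_{1},\ast_{2}]=d_{1}[Y_{1},X_{1}]$ finishes the job. You instead start from the right-hand side and use the truncated BCH product $\exp P.\exp Q=\exp(P+Q+\tfrac{1}{2}[P,Q])$ to collapse it back to the left-hand side. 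The two identities are inverses of one another and both are justified by the same results in \cite{ni}, so the difference is purely one of orientation; neither approach gains or loses anything substantive over the other.
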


\begin{proof}
Letting $\ast_{1}$ denote
\[
d_{1}\left(  X_{1}+Y_{1}\right)
\]
and letting $\ast_{2}$ denote
\[
X_{1}+d_{1}X_{2}\text{,}%
\]
we have
\begin{align}
& \exp\,\ast_{1}+d_{2}\ast_{2}\nonumber\\
& =\exp\,d_{2}\left\{  \ast_{2}+\frac{1}{2}\left[  \ast_{1},\ast_{2}\right]
\right\}  .\exp\,\ast_{1}\label{4.2.3.1}%
\end{align}
by right logarithmic derivation. By the way, we have the following:
\[
\left[  \ast_{1},\ast_{2}\right]  =d_{1}\left[  Y_{1},X_{1}\right]
\]
Therefore the desired formula follows.
\end{proof}

\begin{lemma}
\label{t4.2.4}We have
\begin{align*}
& \exp\,d_{1}\left(  X_{1}+Y_{1}\right)  +d_{2}\left(  X_{1}+d_{1}%
X_{2}\right)  +d_{2}\left(  Y_{1}+d_{1}Y_{2}\right) \\
& =\exp\,d_{1}\left(  X_{1}+Y_{1}\right)  +d_{2}\left(  X_{1}+d_{1}%
X_{2}\right)  .\exp\,d_{2}\left(  Y_{1}+d_{1}Y_{2}-\frac{1}{2}d_{1}\left[
X_{1},Y_{1}\right]  \right)
\end{align*}

\end{lemma}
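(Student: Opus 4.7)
The plan is to mimic the strategy of Lemmas \ref{t4.2.1}--\ref{t4.2.3}, but using \emph{left} logarithmic derivation this time, which is appropriate because the product $.\exp\,d_{2}(\ldots)$ appears on the right-hand side. Accordingly, I would set
\[
\ast_{1}=d_{1}\left(X_{1}+Y_{1}\right)+d_{2}\left(X_{1}+d_{1}X_{2}\right),\qquad \ast_{2}=Y_{1}+d_{1}Y_{2},
\]
so that the left-hand side of the lemma is exactly $\exp\,\ast_{1}+d_{2}\ast_{2}$.

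Next I would invoke the left-logarithmic-derivation identity, namely
\[
\exp\,\ast_{1}+d_{2}\ast_{2}=\exp\,\ast_{1}\,.\,\exp\,d_{2}\left\{\ast_{2}-\tfrac{1}{2}\left[\ast_{1},\ast_{2}\right]\right\},
\]
which is the left-sided analogue of the formula (\ref{4.2.3.1}) used in Lemma \ref{t4.2.3} (the sign flips because the correction now gets absorbed on the right). With this in hand, the only remaining task is to compute $[\ast_{1},\ast_{2}]$ modulo the relations $d_{1}^{2}=d_{2}^{2}=0$ that will be imposed once this bracket is multiplied by the outer factor $d_{2}$.

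Then I would expand the bracket by bilinearity:
\[
\left[\ast_{1},\ast_{2}\right]=\left[d_{1}(X_{1}+Y_{1}),\,Y_{1}+d_{1}Y_{2}\right]+\left[d_{2}(X_{1}+d_{1}X_{2}),\,Y_{1}+d_{1}Y_{2}\right].
\]
The second summand carries a factor of $d_{2}$ and will therefore be annihilated by the outer $d_{2}$ in the exponent (since $d_{2}^{2}=0$). In the first summand, the piece with $d_{1}^{2}$ vanishes, and antisymmetry kills $[Y_{1},Y_{1}]$, leaving simply $d_{1}[X_{1},Y_{1}]$. Substituting this back gives
\[
d_{2}\left\{\ast_{2}-\tfrac{1}{2}\left[\ast_{1},\ast_{2}\right]\right\}=d_{2}\left(Y_{1}+d_{1}Y_{2}-\tfrac{1}{2}d_{1}\left[X_{1},Y_{1}\right]\right),
\]
which is exactly the right-hand factor in the statement of the lemma.

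The only point that deserves care, and which I would treat as the main (mild) obstacle, is the bookkeeping of infinitesimal orders: one must be sure that applying the left-logarithmic-derivation identity with $\ast_{1}$ containing both a $d_{1}$- and a $d_{2}$-contribution is legitimate, and that the $d_{2}$-contribution to $\ast_{1}$ genuinely drops out of the correction term because of $d_{2}^{2}=0$. Once this is checked, the rest is a direct computation parallel to that of Lemma \ref{t4.2.3}.
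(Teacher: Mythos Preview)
Your proposal is correct and follows essentially the same approach as the paper: the paper likewise sets $\ast_{1}=d_{1}(X_{1}+Y_{1})+d_{2}(X_{1}+d_{1}X_{2})$ and $\ast_{2}=Y_{1}+d_{1}Y_{2}$, applies the left-logarithmic-derivation identity $\exp\,\ast_{1}+d_{2}\ast_{2}=\exp\,\ast_{1}.\exp\,d_{2}\{\ast_{2}-\tfrac{1}{2}[\ast_{1},\ast_{2}]\}$, and records $[\ast_{1},\ast_{2}]=d_{1}[X_{1},Y_{1}]$. Your write-up is in fact a bit more explicit than the paper's about why the $d_{2}$-part of $\ast_{1}$ drops out of the bracket.
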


\begin{proof}
Letting $\ast_{1}$ denote
\[
d_{1}\left(  X_{1}+Y_{1}\right)  +d_{2}\left(  X_{1}+d_{1}X_{2}\right)
\]
and letting $\ast_{2}$ denote
\[
Y_{1}+d_{1}Y_{2}\text{,}%
\]
we have
\begin{align}
& \exp\,\ast_{1}+d_{2}\ast_{2}\nonumber\\
& =\exp\,\ast_{1}.\exp\,d_{2}\left\{  \ast_{2}-\frac{1}{2}\left[  \ast
_{1},\ast_{2}\right]  \right\}  .\label{4.2.4.1}%
\end{align}
by left logarithmic derivation. By the way, we have the following:
\[
\left[  \ast_{1},\ast_{2}\right]  =d_{1}\left[  X_{1},Y_{1}\right]
\]
Therefore the desired formula follows.
\end{proof}

\begin{theorem}
\label{t4.3}With $d_{1},d_{2},d_{3}\in D$\ and $X_{1},X_{2},X_{3},Y_{1}%
,Y_{2},Y_{3}\in\mathfrak{g}$, we have
\begin{align*}
& \exp\,\left(  d_{1}+d_{2}+d_{3}\right)  X_{1}+\frac{1}{2}\left(  d_{1}%
+d_{2}+d_{3}\right)  ^{2}X_{2}+\frac{1}{6}\left(  d_{1}+d_{2}+d_{3}\right)
^{3}X_{3}.\\
& \exp\,\left(  d_{1}+d_{2}+d_{3}\right)  Y_{1}+\frac{1}{2}\left(  d_{1}%
+d_{2}+d_{3}\right)  ^{2}Y_{2}+\frac{1}{6}\left(  d_{1}+d_{2}+d_{3}\right)
^{3}Y_{3}\\
& =\exp\,\left(  d_{1}+d_{2}+d_{3}\right)  \left(  X_{1}+Y_{1}\right)
+\frac{1}{2}\left(  d_{1}+d_{2}+d_{3}\right)  ^{2}\left(  X_{2}+Y_{2}+\left[
X_{1},Y_{1}\right]  \right)  +\\
& \frac{1}{6}\left(  d_{1}+d_{2}+d_{3}\right)  ^{3}\left\{  \left(
X_{3}+Y_{3}\right)  +\frac{3}{2}\left(  \left[  X_{1},Y_{2}\right]  +\left[
X_{2},Y_{1}\right]  \right)  +\frac{1}{2}\left[  X_{1}-Y_{1},X_{1}%
,Y_{1}\right]  \right\}
\end{align*}

\end{theorem}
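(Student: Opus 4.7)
The plan is to mirror the structure of the proof of Theorem \ref{t4.2} but push the Baker-Campbell-Hausdorff expansion one order further. First I would rewrite the two factors on the left-hand side by the splitting
\[
\left( d_{1}+d_{2}+d_{3}\right) X_{1}+\tfrac{1}{2}\left( d_{1}+d_{2}+d_{3}\right) ^{2}X_{2}+\tfrac{1}{6}\left( d_{1}+d_{2}+d_{3}\right) ^{3}X_{3}=d_{1}X_{1}+d_{2}\left( X_{1}+d_{1}X_{2}\right) +d_{3}\left( X_{1}+\left( d_{1}+d_{2}\right) X_{2}+d_{1}d_{2}X_{3}\right) ,
\]
which is valid because $d_{i}^{2}=0$ collapses $\left( d_{1}+d_{2}+d_{3}\right) ^{2}$ to $2\left( d_{1}d_{2}+d_{1}d_{3}+d_{2}d_{3}\right) $ and $\left( d_{1}+d_{2}+d_{3}\right) ^{3}$ to $6d_{1}d_{2}d_{3}$; an analogous formula holds on the $Y$-side. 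Each of the two original exponentials now factors, via a left/right logarithmic derivation lemma in the spirit of Lemmas \ref{t4.2.1} and \ref{t4.2.2}, into a product of three simpler exponentials, one carrying the $d_{1}$-piece, one the $d_{2}$-piece and one the $d_{3}$-piece.

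Next I would invoke cubic-order analogues of Lemmas \ref{t4.2.3} and \ref{t4.2.4}. The quadratic versions used only the first BCH correction $\tfrac{1}{2}[\ast_{1},\ast_{2}]$; at cubic order I additionally need to retain the next BCH correction
\[
\tfrac{1}{12}\left[ \ast_{1}-\ast_{2},\ast_{1},\ast_{2}\right]
\]
whenever a product $\ast_{1}\cdot \ast_{2}$ might contribute a surviving $d_{1}d_{2}d_{3}$ term. With these refined lemmas in hand, I would detach the $d_{3}$-piece of each factor, apply Theorem \ref{t4.2} to the remaining $\left( d_{1},d_{2}\right) $-product (producing the quadratic $X_{1}+Y_{1}$ and $X_{2}+Y_{2}+\left[ X_{1},Y_{1}\right] $ pieces already appearing in the statement), and then collect the $d_{3}$-tails, merging their additive parts via Theorem \ref{t4.1}.

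The hard part will be bookkeeping at cubic order. Contributions of the form $d_{1}d_{2}d_{3}$ arise from three distinct sources: (i) the genuinely cubic BCH correction in the refined lemmas; (ii) interactions between the quadratic BCH correction of Theorem \ref{t4.2} and the freshly detached $d_{3}$-pieces (which produce a $d_{1}d_{2}$-carrying single commutator acted on by a $d_{3}$-linear piece); and (iii) commutators between $\left[ X_{1},Y_{1}\right] $-type brackets created earlier and the remaining $d_{3}$-tails. One must show, using the antisymmetry law and the Jacobi identity, that the sum of all double-bracket contributions collapses to the single expression $\tfrac{1}{2}\left[ X_{1}-Y_{1},X_{1},Y_{1}\right] $, while the single-bracket cubic contributions organize into exactly $\tfrac{3}{2}\left( \left[ X_{1},Y_{2}\right] +\left[ X_{2},Y_{1}\right] \right) $. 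This collapse is precisely where the classical $\tfrac{1}{12}$-coefficient of the Baker-Campbell-Hausdorff formula enters the synthetic differential geometric picture.
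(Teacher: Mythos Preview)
Your high-level plan coincides with the paper's: peel off the $d_{3}$-piece, apply Theorem \ref{t4.2} to the $(d_{1}+d_{2})$-part, then reassemble and track the cubic contributions. There are, however, two organizational differences worth noting. First, the paper never performs your three-way split; instead it keeps $s=d_{1}+d_{2}\in D_{2}$ bundled throughout and writes each argument as $\{sX_{1}+\tfrac{1}{2}s^{2}X_{2}\}+d_{3}\{X_{1}+sX_{2}+\tfrac{1}{2}s^{2}X_{3}\}$, so that only a \emph{single} detachment of $d_{3}$ is needed on each side (Lemmas \ref{t4.3.1}, \ref{t4.3.2}) before Theorem \ref{t4.2} applies directly. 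Your three-way factorization works too, but it costs you an extra round of factoring/merging that the paper avoids.

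Second, the cubic correction you intend to use is the Baker--Campbell--Hausdorff term $\tfrac{1}{12}[\ast_{1}-\ast_{2},\ast_{1},\ast_{2}]$, whereas the paper's Lemmas \ref{t4.3.3}--\ref{t4.3.6} are not BCH identities for a product $\exp A\cdot\exp B$ but logarithmic-derivative identities for $\exp(\ast_{1}+d_{3}\ast_{2})$, with correction $\tfrac{1}{2}[\ast_{1},\ast_{2}]+\tfrac{1}{6}[\ast_{1},\ast_{1},\ast_{2}]$ (right version) or $-\tfrac{1}{2}[\ast_{1},\ast_{2}]+\tfrac{1}{6}[\ast_{1},\ast_{1},\ast_{2}]$ (left version). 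The $\tfrac{1}{6}$ here is the coefficient in the expansion of $(e^{\mathrm{ad}}-1)/\mathrm{ad}$, not the BCH $\tfrac{1}{12}$. Both toolkits reach the same answer, but since every manipulation in the paper is of the form $\exp(\ast_{1}+d_{3}\ast_{2})$ with $d_{3}\in D$, the logarithmic-derivative route is the more natural one and keeps the bookkeeping of the nested-bracket terms (your sources (i)--(iii)) somewhat tidier: the paper needs four successive lemmas (\ref{t4.3.3}--\ref{t4.3.6}) rather than a single cubic BCH step, but each lemma is a mechanical computation with no Jacobi-identity cancellation until the very end.
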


\begin{proof}
We have
\begin{align*}
& \exp\,\left(  d_{1}+d_{2}+d_{3}\right)  X_{1}+\frac{1}{2}\left(  d_{1}%
+d_{2}+d_{3}\right)  ^{2}X_{2}+\frac{1}{6}\left(  d_{1}+d_{2}+d_{3}\right)
^{3}X_{3}.\\
& \exp\,\left(  d_{1}+d_{2}+d_{3}\right)  Y_{1}+\frac{1}{2}\left(  d_{1}%
+d_{2}+d_{3}\right)  ^{2}Y_{2}+\frac{1}{6}\left(  d_{1}+d_{2}+d_{3}\right)
^{3}Y_{3}\\
& =\exp\,\left\{  \left(  d_{1}+d_{2}\right)  X_{1}+\frac{1}{2}\left(
d_{1}+d_{2}\right)  ^{2}X_{2}\right\}  +d_{3}\left\{  X_{1}+\left(
d_{1}+d_{2}\right)  X_{2}+\frac{1}{2}\left(  d_{1}+d_{2}\right)  ^{2}%
X_{3}\right\}  .\\
& \exp\,\left\{  \left(  d_{1}+d_{2}\right)  Y_{1}+\frac{1}{2}\left(
d_{1}+d_{2}\right)  ^{2}Y_{2}\right\}  +d_{3}\left\{  Y_{1}+\left(
d_{1}+d_{2}\right)  Y_{2}+\frac{1}{2}\left(  d_{1}+d_{2}\right)  ^{2}%
Y_{3}\right\} \\
& =\exp\,d_{3}\left\{  X_{1}+\left(  d_{1}+d_{2}\right)  X_{2}+\left(
d_{1}+d_{2}\right)  ^{2}\left(  \frac{1}{2}X_{3}+\frac{1}{4}\left[
X_{1},X_{2}\right]  \right)  \right\}  .\\
& \exp\,\left(  d_{1}+d_{2}\right)  X_{1}+\frac{1}{2}\left(  d_{1}%
+d_{2}\right)  ^{2}X_{2}.\exp\,\left(  d_{1}+d_{2}\right)  Y_{1}+\frac{1}%
{2}\left(  d_{1}+d_{2}\right)  ^{2}Y_{2}.\\
& \exp\,d_{3}\left\{  Y_{1}+\left(  d_{1}+d_{2}\right)  Y_{2}+\frac{1}%
{2}\left(  d_{1}+d_{2}\right)  ^{2}\left(  Y_{3}-\frac{1}{2}\left[
Y_{1},Y_{2}\right]  \right)  \right\} \\
& \left)  \text{By Lemmas \ref{t4.3.1}\ and \ref{t4.3.2}}\right( \\
& =\exp\,d_{3}\left\{  X_{1}+\left(  d_{1}+d_{2}\right)  X_{2}+\frac{1}%
{2}\left(  d_{1}+d_{2}\right)  ^{2}X_{3}+\frac{1}{4}\left(  d_{1}%
+d_{2}\right)  ^{2}\left[  X_{1},X_{2}\right]  \right\}  .\\
& \exp\,\left(  d_{1}+d_{2}\right)  \left(  X_{1}+Y_{1}\right)  +\frac{1}%
{2}\left(  d_{1}+d_{2}\right)  ^{2}\left(  X_{2}+Y_{2}+\left[  X_{1}%
,Y_{1}\right]  \right)  .\\
& \exp\,d_{3}\left\{  Y_{1}+\left(  d_{1}+d_{2}\right)  Y_{2}+\frac{1}%
{2}\left(  d_{1}+d_{2}\right)  ^{2}Y_{3}-\frac{1}{4}\left(  d_{1}%
+d_{2}\right)  ^{2}\left[  Y_{1},Y_{2}\right]  \right\} \\
& \left)  \text{By Theorem \ref{t4.2}}\right( \\
& =\exp\,-d_{3}\left\{
\begin{array}
[c]{c}%
\frac{1}{2}\left(  d_{1}+d_{2}\right)  \left[  Y_{1},X_{1}\right]  +\\
\frac{1}{2}\left(  d_{1}+d_{2}\right)  ^{2}\left(
\begin{array}
[c]{c}%
\left[  X_{1}+Y_{1},X_{2}\right]  +\\
\frac{1}{2}\left[  X_{2}+Y_{2}+\left[  X_{1},Y_{1}\right]  ,X_{1}\right]  +\\
\frac{1}{3}\left[  X_{1}+Y_{1},Y_{1},X_{1}\right]
\end{array}
\right)
\end{array}
\right\}  .\\
& \exp\,\left(  d_{1}+d_{2}\right)  \left(  X_{1}+Y_{1}\right)  +\frac{1}%
{2}\left(  d_{1}+d_{2}\right)  ^{2}\left(  X_{2}+Y_{2}+\left[  X_{1}%
,Y_{1}\right]  \right)  +\\
& d_{3}\left\{  X_{1}+\left(  d_{1}+d_{2}\right)  X_{2}+\frac{1}{2}\left(
d_{1}+d_{2}\right)  ^{2}\left(  X_{3}+\frac{1}{2}\left[  X_{1},X_{2}\right]
\right)  \right\}  .\\
& \exp\,d_{3}\left\{  Y_{1}+\left(  d_{1}+d_{2}\right)  Y_{2}+\frac{1}%
{2}\left(  d_{1}+d_{2}\right)  ^{2}\left(  Y_{3}-\frac{1}{2}\left[
Y_{1},Y_{2}\right]  \right)  \right\} \\
& \left)  \text{By Lemma \ref{t4.3.3}}\right(
\end{align*}
We keep on.
\begin{align*}
& =\exp\,\frac{1}{4}\left(  d_{1}+d_{2}\right)  ^{2}d_{3}\left[  X_{1}%
+Y_{1},Y_{1},X_{1}\right]  .\\
& \exp\,\left(  d_{1}+d_{2}\right)  \left(  X_{1}+Y_{1}\right)  +\frac{1}%
{2}\left(  d_{1}+d_{2}\right)  ^{2}\left(  X_{2}+Y_{2}+\left[  X_{1}%
,Y_{1}\right]  \right)  +\\
& d_{3}\left\{  X_{1}+\left(  d_{1}+d_{2}\right)  X_{2}+\frac{1}{2}\left(
d_{1}+d_{2}\right)  ^{2}\left(  X_{3}+\frac{1}{2}\left[  X_{1},X_{2}\right]
\right)  \right\}  -\\
& d_{3}\left\{
\begin{array}
[c]{c}%
\frac{1}{2}\left(  d_{1}+d_{2}\right)  \left[  Y_{1},X_{1}\right]  +\\
\frac{1}{2}\left(  d_{1}+d_{2}\right)  ^{2}\left(
\begin{array}
[c]{c}%
\left[  X_{1}+Y_{1},X_{2}\right]  +\frac{1}{2}\left[  X_{2}+Y_{2}+\left[
X_{1},Y_{1}\right]  ,X_{1}\right]  +\\
\frac{1}{3}\left[  X_{1}+Y_{1},Y_{1},X_{1}\right]
\end{array}
\right)
\end{array}
\right\}  .\\
& \exp\,d_{3}\left\{  Y_{1}+\left(  d_{1}+d_{2}\right)  Y_{2}+\frac{1}%
{2}\left(  d_{1}+d_{2}\right)  ^{2}\left(  Y_{3}-\frac{1}{2}\left[
Y_{1},Y_{2}\right]  \right)  \right\} \\
& \left)  \text{By Lemma \ref{t4.3.4}}\right( \\
& =\exp\,\frac{1}{4}\left(  d_{1}+d_{2}\right)  ^{2}d_{3}\left[  X_{1}%
+Y_{1},Y_{1},X_{1}\right]  .\\
& \exp\,\left(  d_{1}+d_{2}\right)  \left(  X_{1}+Y_{1}\right)  +\frac{1}%
{2}\left(  d_{1}+d_{2}\right)  ^{2}\left(  X_{2}+Y_{2}+\left[  X_{1}%
,Y_{1}\right]  \right)  +\\
& d_{3}\left\{  X_{1}+\left(  d_{1}+d_{2}\right)  X_{2}+\frac{1}{2}\left(
d_{1}+d_{2}\right)  ^{2}\left(  X_{3}+\frac{1}{2}\left[  X_{1},X_{2}\right]
\right)  \right\}  -\\
& d_{3}\left\{
\begin{array}
[c]{c}%
\frac{1}{2}\left(  d_{1}+d_{2}\right)  \left[  Y_{1},X_{1}\right]  +\\
\frac{1}{2}\left(  d_{1}+d_{2}\right)  ^{2}\left(
\begin{array}
[c]{c}%
\left[  X_{1}+Y_{1},X_{2}\right]  +\frac{1}{2}\left[  X_{2}+Y_{2}+\left[
X_{1},Y_{1}\right]  ,X_{1}\right]  +\\
\frac{1}{3}\left[  X_{1}+Y_{1},Y_{1},X_{1}\right]
\end{array}
\right)
\end{array}
\right\}  +\\
& d_{3}\left\{  Y_{1}+\left(  d_{1}+d_{2}\right)  Y_{2}+\frac{1}{2}\left(
d_{1}+d_{2}\right)  ^{2}\left(  Y_{3}-\frac{1}{2}\left[  Y_{1},Y_{2}\right]
\right)  \right\}  .\\
& \exp\,d_{3}\left\{
\begin{array}
[c]{c}%
\frac{1}{2}\left(  d_{1}+d_{2}\right)  \left(  d_{1}+d_{2}\right)  \left[
X_{1},Y_{1}\right] \\
\frac{1}{2}\left(  d_{1}+d_{2}\right)  ^{2}\left(
\begin{array}
[c]{c}%
\left[  X_{1}+Y_{1},Y_{2}\right]  +\frac{1}{2}\left[  X_{2}+Y_{2}+\left[
X_{1},Y_{1}\right]  ,Y_{1}\right]  -\\
\frac{1}{3}\left[  X_{1}+Y_{1},Y_{1},X_{1}\right]
\end{array}
\right)
\end{array}
\right\} \\
& \left)  \text{By Lemma \ref{t4.3.5}}\right(
\end{align*}
We keep on again.
\begin{align*}
& =\exp\,\frac{1}{4}\left(  d_{1}+d_{2}\right)  ^{2}d_{3}\left[  X_{1}%
+Y_{1},Y_{1},X_{1}\right]  .\\
& \exp\,\left(  d_{1}+d_{2}\right)  \left(  X_{1}+Y_{1}\right)  +\frac{1}%
{2}\left(  d_{1}+d_{2}\right)  ^{2}\left(  X_{2}+Y_{2}+\left[  X_{1}%
,Y_{1}\right]  \right)  +\\
& d_{3}\left\{  X_{1}+\left(  d_{1}+d_{2}\right)  X_{2}+\frac{1}{2}\left(
d_{1}+d_{2}\right)  ^{2}\left(  X_{3}+\frac{1}{2}\left[  X_{1},X_{2}\right]
\right)  \right\}  -\\
& d_{3}\left\{
\begin{array}
[c]{c}%
\frac{1}{2}\left(  d_{1}+d_{2}\right)  \left[  Y_{1},X_{1}\right]  +\\
\frac{1}{2}\left(  d_{1}+d_{2}\right)  ^{2}\left(
\begin{array}
[c]{c}%
\left[  X_{1}+Y_{1},X_{2}\right]  +\frac{1}{2}\left[  X_{2}+Y_{2}+\left[
X_{1},Y_{1}\right]  ,X_{1}\right]  +\\
\frac{1}{3}\left[  X_{1}+Y_{1},Y_{1},X_{1}\right]
\end{array}
\right)
\end{array}
\right\}  +\\
& d_{3}\left\{  Y_{1}+\left(  d_{1}+d_{2}\right)  Y_{2}+\frac{1}{2}\left(
d_{1}+d_{2}\right)  ^{2}\left(  Y_{3}-\frac{1}{2}\left[  Y_{1},Y_{2}\right]
\right)  \right\}  +\\
& d_{3}\left\{
\begin{array}
[c]{c}%
\frac{1}{2}\left(  d_{1}+d_{2}\right)  \left[  X_{1},Y_{1}\right]  +\\
\frac{1}{2}\left(  d_{1}+d_{2}\right)  ^{2}\left(
\begin{array}
[c]{c}%
\left[  X_{1}+Y_{1},Y_{2}\right]  +\frac{1}{2}\left[  X_{2}+Y_{2}+\left[
X_{1},Y_{1}\right]  ,Y_{1}\right]  -\\
\frac{1}{3}\left[  X_{1}+Y_{1},Y_{1},X_{1}\right]
\end{array}
\right)
\end{array}
\right\}  .\\
& \exp\,\frac{1}{4}\left(  d_{1}+d_{2}\right)  ^{2}d_{3}\left[  X_{1}%
+Y_{1},X_{1},Y_{1}\right]
\end{align*}
Therefore the desired formula follows at once.
\end{proof}

\begin{lemma}
\label{t4.3.1}
\begin{align*}
& \exp\,\left\{  \left(  d_{1}+d_{2}\right)  X_{1}+\frac{1}{2}\left(
d_{1}+d_{2}\right)  ^{2}X_{2}\right\}  +d_{3}\left\{  X_{1}+\left(
d_{1}+d_{2}\right)  X_{2}+\frac{1}{2}\left(  d_{1}+d_{2}\right)  ^{2}%
X_{3}\right\} \\
& =\exp\,d_{3}\left\{  X_{1}+\left(  d_{1}+d_{2}\right)  X_{2}+\frac{1}%
{2}\left(  d_{1}+d_{2}\right)  ^{2}\left(  X_{3}+\frac{1}{2}\left[
X_{1},X_{2}\right]  \right)  \right\}  .\\
& \exp\,\left\{  \left(  d_{1}+d_{2}\right)  X_{1}+\frac{1}{2}\left(
d_{1}+d_{2}\right)  ^{2}X_{2}\right\}
\end{align*}

\end{lemma}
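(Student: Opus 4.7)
The plan is to follow exactly the template of Lemma \ref{t4.2.3}: write the left-hand side as $\exp(\ast_1 + d_3\ast_2)$ for a suitable pair $\ast_1,\ast_2$, apply the right logarithmic derivation identity
\[
\exp\,\ast_1 + d_3\ast_2 = \exp\,d_3\left\{\ast_2 + \tfrac{1}{2}[\ast_1,\ast_2]\right\}.\exp\,\ast_1,
\]
and then compute $[\ast_1,\ast_2]$ to check that the bracket contribution is precisely the extra $\tfrac{1}{2}[X_1,X_2]$ term appearing in the first exponential on the right-hand side.

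The obvious choice is
\[
\ast_1 := (d_1+d_2)X_1 + \tfrac{1}{2}(d_1+d_2)^2 X_2, \qquad \ast_2 := X_1 + (d_1+d_2)X_2 + \tfrac{1}{2}(d_1+d_2)^2 X_3,
\]
so that the left-hand side of the lemma is literally $\exp(\ast_1 + d_3\ast_2)$. With this identification the whole lemma collapses to a purely algebraic Lie-bracket computation carried out modulo the nilpotency relations $d_1^2 = d_2^2 = 0$; in particular $(d_1+d_2)^3 = 0$ and every monomial of total degree $\geq 3$ in $d_1,d_2$ vanishes, as does any $[X_i,X_i]$ by antisymmetry.

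Expanding $[\ast_1,\ast_2]$ by bilinearity and discarding all such vanishing summands, only two contributions survive: the term $(d_1+d_2)^2[X_1,X_2]$ from $[(d_1+d_2)X_1,\,(d_1+d_2)X_2]$, and the term $-\tfrac{1}{2}(d_1+d_2)^2[X_1,X_2]$ from $[\tfrac{1}{2}(d_1+d_2)^2 X_2,\,X_1]$. Hence $[\ast_1,\ast_2] = \tfrac{1}{2}(d_1+d_2)^2[X_1,X_2]$, so that
\[
\ast_2 + \tfrac{1}{2}[\ast_1,\ast_2] = X_1 + (d_1+d_2)X_2 + \tfrac{1}{2}(d_1+d_2)^2\left(X_3 + \tfrac{1}{2}[X_1,X_2]\right),
\]
which is exactly the argument of the first exponential on the right-hand side. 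I expect no serious obstacle; the only real bookkeeping issue is to keep careful track of which monomials in $d_1,d_2$ die by nilpotency so that the bracket collapses to just one term.
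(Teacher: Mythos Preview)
Your proposal is correct and essentially identical to the paper's own proof: the paper sets the same $\ast_1,\ast_2$, invokes the right logarithmic derivation identity $\exp(\ast_1+d_3\ast_2)=\exp d_3(\ast_2+\tfrac{1}{2}[\ast_1,\ast_2]).\exp\ast_1$, and computes $[\ast_1,\ast_2]=(d_1+d_2)^2\big([X_1,X_2]+\tfrac{1}{2}[X_2,X_1]\big)=\tfrac{1}{2}(d_1+d_2)^2[X_1,X_2]$, which is precisely your two surviving summands.
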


\begin{proof}
Letting $\ast_{1}$ denote
\[
\left(  d_{1}+d_{2}\right)  X_{1}+\frac{1}{2}\left(  d_{1}+d_{2}\right)
^{2}X_{2}%
\]
and letting $\ast_{2}$ denote
\[
X_{1}+\left(  d_{1}+d_{2}\right)  X_{2}+\frac{1}{2}\left(  d_{1}+d_{2}\right)
^{2}X_{3}\text{,}%
\]
we have
\begin{align}
& \exp\,\ast_{1}+d_{3}\ast_{2}\nonumber\\
& =\exp\,d_{3}\left(  \ast_{2}+\frac{1}{2}\left[  \ast_{1},\ast_{2}\right]
\right)  .\exp\,\ast_{1}\label{4.3.1.1}%
\end{align}
by right logarithmic derivation. By the way, we have the following:
\begin{align*}
\left[  \ast_{1},\ast_{2}\right]   & =\left(  d_{1}+d_{2}\right)  ^{2}\left(
\left[  X_{1},X_{2}\right]  +\frac{1}{2}\left[  X_{2},X_{1}\right]  \right) \\
& =\frac{1}{2}\left(  d_{1}+d_{2}\right)  ^{2}\left[  X_{1},X_{2}\right]
\end{align*}
Therefore the desired formula follows.
\end{proof}

\begin{lemma}
\label{t4.3.2}
\begin{align*}
& \exp\,\left\{  \left(  d_{1}+d_{2}\right)  Y_{1}+\frac{1}{2}\left(
d_{1}+d_{2}\right)  ^{2}Y_{2}\right\}  +d_{3}\left\{  Y_{1}+\left(
d_{1}+d_{2}\right)  Y_{2}+\frac{1}{2}\left(  d_{1}+d_{2}\right)  ^{2}%
Y_{3}\right\} \\
& =\exp\,\left(  d_{1}+d_{2}\right)  Y_{1}+\frac{1}{2}\left(  d_{1}%
+d_{2}\right)  ^{2}Y_{2}.\\
& \exp\,d_{3}\left\{  Y_{1}+\left(  d_{1}+d_{2}\right)  Y_{2}+\frac{1}%
{2}\left(  d_{1}+d_{2}\right)  ^{2}\left(  Y_{3}-\frac{1}{2}\left[
Y_{1},Y_{2}\right]  \right)  \right\}
\end{align*}

\end{lemma}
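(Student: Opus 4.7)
The plan is to follow the exact template of Lemma \ref{t4.3.1}, but with left logarithmic derivation replacing right logarithmic derivation. That is, with the same abbreviations
\[
\ast_{1}=\left(d_{1}+d_{2}\right)Y_{1}+\tfrac{1}{2}\left(d_{1}+d_{2}\right)^{2}Y_{2},\qquad \ast_{2}=Y_{1}+\left(d_{1}+d_{2}\right)Y_{2}+\tfrac{1}{2}\left(d_{1}+d_{2}\right)^{2}Y_{3},
\]
I would invoke the left-sided identity
\[
\exp\,\ast_{1}+d_{3}\ast_{2}=\exp\,\ast_{1}.\exp\,d_{3}\!\left(\ast_{2}-\tfrac{1}{2}\left[\ast_{1},\ast_{2}\right]\right),
\]
which is the left-hand companion to the expansion used in Lemma \ref{t4.3.1} and whose coefficient $-\tfrac{1}{2}$ is precisely what accounts for the sign discrepancy between the two results.

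The only nontrivial step is to compute $\left[\ast_{1},\ast_{2}\right]$, and here I would exploit the nilpotency $d_{1}^{2}=d_{2}^{2}=0$, so that $\left(d_{1}+d_{2}\right)^{3}=0$ and only terms of total degree at most $2$ in $d_{1}+d_{2}$ survive. Expanding bilinearly, the $\left(d_{1}+d_{2}\right)$-terms vanish because they involve $[Y_{1},Y_{1}]$, while the $\left(d_{1}+d_{2}\right)^{2}$-contribution consists of one copy of $[Y_{1},Y_{2}]$ and one half-copy of $[Y_{2},Y_{1}]$, combining (by antisymmetry) to
\[
\left[\ast_{1},\ast_{2}\right]=\tfrac{1}{2}\left(d_{1}+d_{2}\right)^{2}\left[Y_{1},Y_{2}\right].
\]

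Substituting this into the left-derivation identity gives the correction term $-\tfrac{1}{4}\left(d_{1}+d_{2}\right)^{2}[Y_{1},Y_{2}]$ inside the $d_{3}\{\dots\}$ bracket, which regroups exactly as $\tfrac{1}{2}\left(d_{1}+d_{2}\right)^{2}\bigl(-\tfrac{1}{2}[Y_{1},Y_{2}]\bigr)$, matching the stated formula. I anticipate no real obstacle; the one thing to be careful about is keeping track of which half-integer coefficients come from the $\frac{1}{2}$ in front of $Y_{2}$ in $\ast_{1}$ versus the $\frac{1}{2}$ in the logarithmic-derivation identity, but since both Lemmas \ref{t4.3.1} and \ref{t4.3.2} have the same $\ast_{i}$ structure (with $X$ replaced by $Y$), the parallel is tight and no new ideas are needed.
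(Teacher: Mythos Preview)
Your proposal is correct and matches the paper's proof essentially step for step: the paper sets the same $\ast_{1}$ and $\ast_{2}$, invokes the same left logarithmic derivation identity $\exp\,\ast_{1}+d_{3}\ast_{2}=\exp\,\ast_{1}.\exp\,d_{3}\bigl(\ast_{2}-\tfrac{1}{2}[\ast_{1},\ast_{2}]\bigr)$, and computes $[\ast_{1},\ast_{2}]=\tfrac{1}{2}(d_{1}+d_{2})^{2}[Y_{1},Y_{2}]$ in the same way.
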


\begin{proof}
Letting $\ast_{1}$ denote
\[
\left(  d_{1}+d_{2}\right)  Y_{1}+\frac{1}{2}\left(  d_{1}+d_{2}\right)
^{2}Y_{2}%
\]
and letting $\ast_{2}$ denote
\[
Y_{1}+\left(  d_{1}+d_{2}\right)  Y_{2}+\frac{1}{2}\left(  d_{1}+d_{2}\right)
^{2}Y_{3}\text{,}%
\]
we have
\begin{align}
& \exp\,\ast_{1}+d_{3}\ast_{2}\nonumber\\
& =\exp\,\ast_{1}.\exp\,d_{3}\left(  \ast_{2}-\frac{1}{2}\left[  \ast_{1}%
,\ast_{2}\right]  \right) \label{4.3.2.1}%
\end{align}
by left logarithmic derivation. By the way, we have the following:
\begin{align*}
\left[  \ast_{1},\ast_{2}\right]   & =\left(  d_{1}+d_{2}\right)  ^{2}\left(
\left[  Y_{1},Y_{2}\right]  +\frac{1}{2}\left[  Y_{2},Y_{1}\right]  \right) \\
& =\frac{1}{2}\left(  d_{1}+d_{2}\right)  ^{2}\left[  Y_{1},Y_{2}\right]
\end{align*}
Therefore the desired formula follows.
\end{proof}

\begin{lemma}
\label{t4.3.3}
\begin{align*}
& \exp\,\left\{  \left(  d_{1}+d_{2}\right)  \left(  X_{1}+Y_{1}\right)
+\frac{1}{2}\left(  d_{1}+d_{2}\right)  ^{2}\left(  X_{2}+Y_{2}+\left[
X_{1},Y_{1}\right]  \right)  \right\}  +\\
& d_{3}\left\{  X_{1}+\left(  d_{1}+d_{2}\right)  X_{2}+\frac{1}{2}\left(
d_{1}+d_{2}\right)  ^{2}\left(  X_{3}+\frac{1}{2}\left[  X_{1},X_{2}\right]
\right)  \right\} \\
& =\exp\,d_{3}\left\{
\begin{array}
[c]{c}%
X_{1}+\left(  d_{1}+d_{2}\right)  X_{2}+\frac{1}{2}\left(  d_{1}+d_{2}\right)
^{2}\left(  X_{3}+\frac{1}{2}\left[  X_{1},X_{2}\right]  \right)  +\\
\frac{1}{2}\left(  d_{1}+d_{2}\right)  \left[  Y_{1},X_{1}\right]  +\\
\left(  d_{1}+d_{2}\right)  ^{2}\left(
\begin{array}
[c]{c}%
\frac{1}{2}\left[  X_{1}+Y_{1},X_{2}\right]  +\frac{1}{4}\left[  X_{2}%
+Y_{2}+\left[  X_{1},Y_{1}\right]  ,X_{1}\right]  +\\
\frac{1}{6}\left[  X_{1}+Y_{1},Y_{1},X_{1}\right]
\end{array}
\right)
\end{array}
\right\}  .\\
& \exp\,\left(  d_{1}+d_{2}\right)  \left(  X_{1}+Y_{1}\right)  +\frac{1}%
{2}\left(  d_{1}+d_{2}\right)  ^{2}\left(  X_{2}+Y_{2}+\left[  X_{1}%
,Y_{1}\right]  \right)
\end{align*}

\end{lemma}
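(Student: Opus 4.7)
The plan is to imitate the structure of Lemmas \ref{t4.2.3}, \ref{t4.3.1} and \ref{t4.3.2}. Let $\ast_1$ denote
\[
(d_1+d_2)(X_1+Y_1) + \tfrac{1}{2}(d_1+d_2)^2\bigl(X_2+Y_2+[X_1,Y_1]\bigr)
\]
and let $\ast_2$ denote
\[
X_1 + (d_1+d_2)X_2 + \tfrac{1}{2}(d_1+d_2)^2\bigl(X_3 + \tfrac{1}{2}[X_1,X_2]\bigr),
\]
so that the left-hand side of the lemma is exactly $\exp\,\ast_1 + d_3\ast_2$.

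The crucial point, distinguishing the present lemma from Lemmas \ref{t4.3.1} and \ref{t4.3.2}, is that the degree-one piece $(d_1+d_2)(X_1+Y_1)$ of $\ast_1$ now has a non-vanishing bracket $(d_1+d_2)[Y_1,X_1]$ with the leading constant term $X_1$ of $\ast_2$. Consequently the first-order right logarithmic derivation that sufficed before must be upgraded to its second-order version
\[
\exp\,\ast_1 + d_3\ast_2 = \exp\,d_3\Bigl(\ast_2 + \tfrac{1}{2}[\ast_1,\ast_2] + \tfrac{1}{6}[\ast_1,[\ast_1,\ast_2]]\Bigr)\cdot\exp\,\ast_1;
\]
every further Baker-Campbell-Hausdorff correction is annihilated either by $d_3^2=0$ or by $(d_1+d_2)^3=0$, the latter holding because $d_1^2=d_2^2=0$.

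The remaining task is to compute the two brackets modulo $(d_1+d_2)^3$. A routine bilinear expansion yields
\[
[\ast_1,\ast_2] = (d_1+d_2)[Y_1,X_1] + (d_1+d_2)^2\Bigl\{[X_1+Y_1,X_2] + \tfrac{1}{2}[X_2+Y_2+[X_1,Y_1],X_1]\Bigr\},
\]
while the double bracket survives only at degree two, contributing exactly
\[
[\ast_1,[\ast_1,\ast_2]] = (d_1+d_2)^2[X_1+Y_1,[Y_1,X_1]].
\]
Substituting into the displayed formula above and recalling the convention that $[X_1+Y_1,Y_1,X_1]$ abbreviates $[X_1+Y_1,[Y_1,X_1]]$, one recovers precisely the $d_3$-coefficient on the right-hand side of the lemma.

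I expect the main obstacle to be conceptual rather than computational: one has to notice that the accidental cancellations (such as $[X_1,X_1]=0$) which kept the previous two lemmas first-order no longer hold once $[Y_1,X_1]$ enters at degree one, so the cubic term $\tfrac{1}{6}[\ast_1,[\ast_1,\ast_2]]$ must now be retained. After that, the bracket bookkeeping is a mechanical application of bilinearity, antisymmetry, and the nilpotency of the $d_i$.
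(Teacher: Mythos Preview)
Your proposal is correct and follows essentially the same route as the paper: the same choice of $\ast_1$ and $\ast_2$, the same second-order right logarithmic derivation formula $\exp\,\ast_1+d_3\ast_2=\exp\,d_3(\ast_2+\tfrac{1}{2}[\ast_1,\ast_2]+\tfrac{1}{6}[\ast_1,\ast_1,\ast_2])\cdot\exp\,\ast_1$, and the same bracket computations. Your added remark explaining \emph{why} the $\tfrac{1}{6}[\ast_1,\ast_1,\ast_2]$ term must be retained here (the non-vanishing of $[Y_1,X_1]$ at degree one) is a helpful clarification not made explicit in the paper.
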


\begin{proof}
Letting $\ast_{1}$ denote
\[
\left(  d_{1}+d_{2}\right)  \left(  X_{1}+Y_{1}\right)  +\frac{1}{2}\left(
d_{1}+d_{2}\right)  ^{2}\left(  X_{2}+Y_{2}+\left[  X_{1},Y_{1}\right]
\right)
\]
and letting $\ast_{2}$ denote
\[
X_{1}+\left(  d_{1}+d_{2}\right)  X_{2}+\frac{1}{2}\left(  d_{1}+d_{2}\right)
^{2}\left(  X_{3}+\frac{1}{2}\left[  X_{1},X_{2}\right]  \right)  \text{,}%
\]
we have
\begin{align}
& \exp\,\ast_{1}+d_{3}\ast_{2}\nonumber\\
& =\exp\,d_{3}\left(  \ast_{2}+\frac{1}{2}\left[  \ast_{1},\ast_{2}\right]
+\frac{1}{6}\left[  \ast_{1},\ast_{1},\ast_{2}\right]  \right)  .\exp
\,\ast_{1}\label{4.3.3.1}%
\end{align}
by right logarithmic derivation. By the way, we have the following:
\[
\left[  \ast_{1},\ast_{2}\right]  =\left(  d_{1}+d_{2}\right)  \left[
Y_{1},X_{1}\right]  +\left(  d_{1}+d_{2}\right)  ^{2}\left(  \left[
X_{1}+Y_{1},X_{2}\right]  +\frac{1}{2}\left[  X_{2}+Y_{2}+\left[  X_{1}%
,Y_{1}\right]  ,X_{1}\right]  \right)
\]
\[
\left[  \ast_{1},\ast_{1},\ast_{2}\right]  =\left(  d_{1}+d_{2}\right)
^{2}\left[  X_{1}+Y_{1},Y_{1},X_{1}\right]
\]
\begin{align*}
& \frac{1}{2}\left[  \ast_{1},\ast_{2}\right]  +\frac{1}{6}\left[  \ast
_{1},\ast_{1},\ast_{2}\right] \\
& =\frac{1}{2}\left(  d_{1}+d_{2}\right)  \left[  Y_{1},X_{1}\right]  +\\
& \left(  d_{1}+d_{2}\right)  ^{2}\left(  \frac{1}{2}\left[  X_{1}+Y_{1}%
,X_{2}\right]  +\frac{1}{4}\left[  X_{2}+Y_{2}+\left[  X_{1},Y_{1}\right]
,X_{1}\right]  +\frac{1}{6}\left[  X_{1}+Y_{1},Y_{1},X_{1}\right]  \right)
\end{align*}
Therefore the desired formula follows.
\end{proof}

\begin{lemma}
\label{t4.3.4}
\begin{align*}
& \exp\,\left(  d_{1}+d_{2}\right)  \left(  X_{1}+Y_{1}\right)  +\frac{1}%
{2}\left(  d_{1}+d_{2}\right)  ^{2}\left(  X_{2}+Y_{2}+\left[  X_{1}%
,Y_{1}\right]  \right)  +\\
& d_{3}\left\{  X_{1}+\left(  d_{1}+d_{2}\right)  X_{2}+\frac{1}{2}\left(
d_{1}+d_{2}\right)  ^{2}\left(  X_{3}+\frac{1}{2}\left[  X_{1},X_{2}\right]
\right)  \right\}  -\\
& d_{3}\left\{
\begin{array}
[c]{c}%
\frac{1}{2}\left(  d_{1}+d_{2}\right)  \left[  Y_{1},X_{1}\right]  +\\
\frac{1}{2}\left(  d_{1}+d_{2}\right)  ^{2}\left(
\begin{array}
[c]{c}%
\left[  X_{1}+Y_{1},X_{2}\right]  +\frac{1}{2}\left[  X_{2}+Y_{2}+\left[
X_{1},Y_{1}\right]  ,X_{1}\right]  +\\
\frac{1}{3}\left[  X_{1}+Y_{1},Y_{1},X_{1}\right]
\end{array}
\right)
\end{array}
\right\} \\
& =\exp\,-d_{3}\left\{
\begin{array}
[c]{c}%
\frac{1}{2}\left(  d_{1}+d_{2}\right)  \left[  Y_{1},X_{1}\right]  +\\
\left(  d_{1}+d_{2}\right)  ^{2}\left(
\begin{array}
[c]{c}%
\frac{1}{2}\left[  X_{1}+Y_{1},X_{2}\right]  +\frac{1}{4}\left[  X_{2}%
+Y_{2}+\left[  X_{1},Y_{1}\right]  ,X_{1}\right]  +\\
\frac{1}{6}\left[  X_{1}+Y_{1},Y_{1},X_{1}\right]
\end{array}
\right)  +\\
\frac{1}{4}\left(  d_{1}+d_{2}\right)  ^{2}\left[  X_{1}+Y_{1},Y_{1}%
,X_{1}\right]
\end{array}
\right\}  .\\
& \exp\,\left(  d_{1}+d_{2}\right)  \left(  X_{1}+Y_{1}\right)  +\frac{1}%
{2}\left(  d_{1}+d_{2}\right)  ^{2}\left(  X_{2}+Y_{2}+\left[  X_{1}%
,Y_{1}\right]  \right)  +\\
& d_{3}\left\{  X_{1}+\left(  d_{1}+d_{2}\right)  X_{2}+\frac{1}{2}\left(
d_{1}+d_{2}\right)  ^{2}\left(  X_{3}+\frac{1}{2}\left[  X_{1},X_{2}\right]
\right)  \right\}  .
\end{align*}

\end{lemma}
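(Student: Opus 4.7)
The plan is to apply right logarithmic derivation once, peeling off the final $-d_3\{\cdots\}$ summand of the exponent on the left-hand side. Let $\ast_1$ denote
\[
(d_1+d_2)(X_1+Y_1)+\tfrac{1}{2}(d_1+d_2)^2(X_2+Y_2+[X_1,Y_1])+d_3\left\{X_1+(d_1+d_2)X_2+\tfrac{1}{2}(d_1+d_2)^2\left(X_3+\tfrac{1}{2}[X_1,X_2]\right)\right\},
\]
and let $\ast_2$ be the negative of the subtracted bracketed expression, so that
\[
\ast_2=-\tfrac{1}{2}(d_1+d_2)[Y_1,X_1]-\tfrac{1}{2}(d_1+d_2)^2\left([X_1+Y_1,X_2]+\tfrac{1}{2}[X_2+Y_2+[X_1,Y_1],X_1]+\tfrac{1}{3}[X_1+Y_1,Y_1,X_1]\right).
\]
The left-hand side of the lemma is then $\exp(\ast_1+d_3\ast_2)$, and right logarithmic derivation through the third-order term of the adjoint expansion (exactly as in Lemma \ref{t4.3.3}) gives
\[
\exp(\ast_1+d_3\ast_2)=\exp\bigl(d_3\bigl(\ast_2+\tfrac{1}{2}[\ast_1,\ast_2]+\tfrac{1}{6}[\ast_1,\ast_1,\ast_2]\bigr)\bigr).\exp\ast_1.
\]

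Next I evaluate the two commutator corrections. Since the whole correction is multiplied by $d_3\in D$ with $d_3^2=0$, any piece of $\ast_1$ carrying a factor of $d_3$ drops out of both $[\ast_1,\ast_2]$ and $[\ast_1,\ast_1,\ast_2]$; hence $\ast_1$ may be replaced in those commutators by its $d_3$-free part
\[
A:=(d_1+d_2)(X_1+Y_1)+\tfrac{1}{2}(d_1+d_2)^2(X_2+Y_2+[X_1,Y_1]).
\]
Moreover $(d_1+d_2)^3=0$ because $d_1^2=d_2^2=0$, so only contributions of $(d_1+d_2)$-degree at most two survive. The $(d_1+d_2)^2$ summand of $\ast_2$ thus drops out of $[A,\ast_2]$, leaving only
\[
[A,\ast_2]=-\tfrac{1}{2}(d_1+d_2)^2[X_1+Y_1,[Y_1,X_1]]=-\tfrac{1}{2}(d_1+d_2)^2[X_1+Y_1,Y_1,X_1],
\]
while $[A,[A,\ast_2]]$ vanishes outright for the same degree reason.

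Substituting these back, the argument of the $d_3$ prefactor becomes $\ast_2-\tfrac{1}{4}(d_1+d_2)^2[X_1+Y_1,Y_1,X_1]$. Writing $C:=-\ast_2$ for the bracketed expression that is subtracted on the left, this equals $-\bigl(C+\tfrac{1}{4}(d_1+d_2)^2[X_1+Y_1,Y_1,X_1]\bigr)$, so the prefactor reads $\exp\bigl(-d_3\bigl\{C+\tfrac{1}{4}(d_1+d_2)^2[X_1+Y_1,Y_1,X_1]\bigr\}\bigr)$, which is precisely the leading factor of the claimed right-hand side; the trailing $\exp\ast_1$ reproduces the second exponential verbatim.

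The expected obstacle is purely bookkeeping: one must keep the sign from $[Y_1,X_1]=-[X_1,Y_1]$ straight, verify that every cross term involving $X_2$, $X_3$ or the bracketed corrections inside $\ast_2$ indeed carries a $(d_1+d_2)^3$ factor and is therefore annihilated, and confirm that the coefficient $\tfrac{1}{4}$ of the surviving iterated commutator emerges from the product of the $\tfrac{1}{2}$ coming from the Lie series with the $\tfrac{1}{2}$ already present in $\ast_2$.
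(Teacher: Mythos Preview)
Your proof is correct and follows essentially the same route as the paper: right logarithmic derivation applied to the splitting $\exp(\ast_1+d_3\ast_2)$ with $\ast_2=-C$, followed by the computation $[\ast_1,\ast_2]=-\tfrac{1}{2}(d_1+d_2)^2[X_1+Y_1,Y_1,X_1]$. Two cosmetic differences: the paper takes $\ast_1$ to be the \emph{entire} left-hand exponent (including the $-d_3C$ piece) and invokes only the first-order formula (\ref{4.3.1.1}), whereas you take $\ast_1=P$ and carry the $\tfrac{1}{6}[\ast_1,\ast_1,\ast_2]$ term along before observing it vanishes by degree count; since $d_3^2=0$ the extra $d_3$-piece in the paper's $\ast_1$ is invisible in the bracket anyway, so both choices lead to the same computation.
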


\begin{proof}
Letting $\ast_{1}$ denote
\begin{align*}
& \,\left(  d_{1}+d_{2}\right)  \left(  X_{1}+Y_{1}\right)  +\frac{1}%
{2}\left(  d_{1}+d_{2}\right)  ^{2}\left(  X_{2}+Y_{2}+\left[  X_{1}%
,Y_{1}\right]  \right)  +\\
& d_{3}\left\{  X_{1}+\left(  d_{1}+d_{2}\right)  X_{2}+\frac{1}{2}\left(
d_{1}+d_{2}\right)  ^{2}\left(  X_{3}+\frac{1}{2}\left[  X_{1},X_{2}\right]
\right)  \right\}  -\\
& d_{3}\left\{
\begin{array}
[c]{c}%
\frac{1}{2}\left(  d_{1}+d_{2}\right)  \left[  Y_{1},X_{1}\right]  +\\
\frac{1}{2}\left(  d_{1}+d_{2}\right)  ^{2}\left(
\begin{array}
[c]{c}%
\left[  X_{1}+Y_{1},X_{2}\right]  +\frac{1}{2}\left[  X_{2}+Y_{2}+\left[
X_{1},Y_{1}\right]  ,X_{1}\right]  +\\
\frac{1}{3}\left[  X_{1}+Y_{1},Y_{1},X_{1}\right]
\end{array}
\right)
\end{array}
\right\}
\end{align*}
and letting $\ast_{2}$ denote
\begin{align*}
& -\frac{1}{2}\left(  d_{1}+d_{2}\right)  \left[  Y_{1},X_{1}\right]  -\\
& \frac{1}{2}\left(  d_{1}+d_{2}\right)  ^{2}\left(
\begin{array}
[c]{c}%
\left[  X_{1}+Y_{1},X_{2}\right]  +\frac{1}{2}\left[  X_{2}+Y_{2}+\left[
X_{1},Y_{1}\right]  ,X_{1}\right]  +\\
\frac{1}{3}\left[  X_{1}+Y_{1},Y_{1},X_{1}\right]
\end{array}
\right)
\end{align*}
we have (\ref{4.3.1.1}). By the way, we have the following:
\[
\left[  \ast_{1},\ast_{2}\right]  =-\frac{1}{2}\left(  d_{1}+d_{2}\right)
^{2}\left[  X_{1}+Y_{1},Y_{1},X_{1}\right]
\]
Therefore the desired formula follows at once.
\end{proof}

\begin{lemma}
\label{t4.3.5}
\begin{align*}
& \exp\,\left\{  \left(  d_{1}+d_{2}\right)  \left(  X_{1}+Y_{1}\right)
+\frac{1}{2}\left(  d_{1}+d_{2}\right)  ^{2}\left(  X_{2}+Y_{2}+\left[
X_{1},Y_{1}\right]  \right)  \right\}  +\\
& d_{3}\left\{  X_{1}+\left(  d_{1}+d_{2}\right)  X_{2}+\frac{1}{2}\left(
d_{1}+d_{2}\right)  ^{2}\left(  X_{3}+\frac{1}{2}\left[  X_{1},X_{2}\right]
\right)  \right\}  -\\
& d_{3}\left\{
\begin{array}
[c]{c}%
\frac{1}{2}\left(  d_{1}+d_{2}\right)  \left[  Y_{1},X_{1}\right]  +\\
\frac{1}{2}\left(  d_{1}+d_{2}\right)  ^{2}\left(
\begin{array}
[c]{c}%
\left[  X_{1}+Y_{1},X_{2}\right]  +\frac{1}{2}\left[  X_{2}+Y_{2}+\left[
X_{1},Y_{1}\right]  ,X_{1}\right]  +\\
\frac{1}{3}\left[  X_{1}+Y_{1},Y_{1},X_{1}\right]
\end{array}
\right)
\end{array}
\right\}  +\\
& d_{3}\left\{  Y_{1}+\left(  d_{1}+d_{2}\right)  Y_{2}+\frac{1}{2}\left(
d_{1}+d_{2}\right)  ^{2}\left(  Y_{3}-\frac{1}{2}\left[  Y_{1},Y_{2}\right]
\right)  \right\} \\
& =\exp\,\left\{  \left(  d_{1}+d_{2}\right)  \left(  X_{1}+Y_{1}\right)
+\frac{1}{2}\left(  d_{1}+d_{2}\right)  ^{2}\left(  X_{2}+Y_{2}+\left[
X_{1},Y_{1}\right]  \right)  \right\}  +\\
& d_{3}\left\{  X_{1}+\left(  d_{1}+d_{2}\right)  X_{2}+\frac{1}{2}\left(
d_{1}+d_{2}\right)  ^{2}\left(  X_{3}+\frac{1}{2}\left[  X_{1},X_{2}\right]
\right)  \right\}  -\\
& d_{3}\left\{
\begin{array}
[c]{c}%
\frac{1}{2}\left(  d_{1}+d_{2}\right)  \left[  X_{1},Y_{1}\right] \\
\frac{1}{2}\left(  d_{1}+d_{2}\right)  ^{2}\left(
\begin{array}
[c]{c}%
\left[  X_{1}+Y_{1},Y_{2}\right]  +\frac{1}{2}\left[  X_{2}+Y_{2}+\left[
X_{1},Y_{1}\right]  ,Y_{1}\right]  -\\
\frac{1}{3}\left[  X_{1}+Y_{1},Y_{1},X_{1}\right]
\end{array}
\right)
\end{array}
\right\}
\end{align*}

\end{lemma}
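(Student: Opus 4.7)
The plan is to follow the pattern established in Lemmas \ref{t4.3.1}--\ref{t4.3.4}: choose abbreviations $\ast_{1}$ and $\ast_{2}$ and invoke a suitable logarithmic-derivation identity. The appropriate tool here is the second-order left-handed counterpart of formula (\ref{4.3.3.1}), namely
\[
\exp(\ast_{1}+d_{3}\ast_{2})=\exp(\ast_{1}).\exp d_{3}\!\left(\ast_{2}-\tfrac{1}{2}[\ast_{1},\ast_{2}]+\tfrac{1}{6}[\ast_{1},\ast_{1},\ast_{2}]\right).
\]
I shall let $\ast_{1}$ denote the first three additive blocks of the exponent on the left-hand side, and let $\ast_{2}=Y_{1}+(d_{1}+d_{2})Y_{2}+\tfrac{1}{2}(d_{1}+d_{2})^{2}\!\left(Y_{3}-\tfrac{1}{2}[Y_{1},Y_{2}]\right)$ be the final $Y$-block, so that the left-hand side is exactly $\exp(\ast_{1}+d_{3}\ast_{2})$.

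Because every correction commutator is premultiplied by $d_{3}$, any $d_{3}$-linear contribution to $\ast_{1}$ is killed by $d_{3}^{2}=0$. Therefore, inside the bracket computations, I may replace $\ast_{1}$ by its $d_{3}$-free truncation
\[
A=(d_{1}+d_{2})(X_{1}+Y_{1})+\tfrac{1}{2}(d_{1}+d_{2})^{2}\!\left(X_{2}+Y_{2}+[X_{1},Y_{1}]\right).
\]
A direct bilinear expansion, modulo $(d_{1}+d_{2})^{3}=0$ and using $[Y_{1},Y_{1}]=0$, yields
\[
[A,\ast_{2}]=(d_{1}+d_{2})[X_{1},Y_{1}]+(d_{1}+d_{2})^{2}\!\left([X_{1}+Y_{1},Y_{2}]+\tfrac{1}{2}[X_{2}+Y_{2}+[X_{1},Y_{1}],Y_{1}]\right)
\]
together with $[A,A,\ast_{2}]=(d_{1}+d_{2})^{2}[X_{1}+Y_{1},X_{1},Y_{1}]$, the latter coming only from the lowest-order factor in each copy of $A$ paired with the lowest-order factor of $\ast_{2}$.

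Plugging these into the displayed left-logarithmic-derivation formula and applying the antisymmetry $[X_{1}+Y_{1},X_{1},Y_{1}]=-[X_{1}+Y_{1},Y_{1},X_{1}]$ converts $\tfrac{1}{6}[\ast_{1},\ast_{1},\ast_{2}]$ into a contribution proportional to the triple bracket $[X_{1}+Y_{1},Y_{1},X_{1}]$ that appears in the target. The remaining two blocks of the correction, at orders $(d_{1}+d_{2})$ and $(d_{1}+d_{2})^{2}$, match the final $d_{3}$-block of the right-hand side of the lemma once one uses $[X_{1},Y_{1}]=-[Y_{1},X_{1}]$ to reconcile signs with the unchanged $-d_{3}\{[Y_{1},X_{1}]\text{-block}\}$ carried over from the left-hand side.

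I expect the main obstacle to be purely combinatorial bookkeeping: tracking the half-dozen coefficient fractions ($\tfrac{1}{2}$, $\tfrac{1}{4}$, $\tfrac{1}{6}$, $\tfrac{1}{3}$) as they arise from nested halvings, negations, and the use of the iterative inversion of $W+\tfrac{1}{2}[\ast_{1},W]+\tfrac{1}{12}[\ast_{1},\ast_{1},W]=\ast_{2}$; and keeping the composite bracket $[X_{2}+Y_{2}+[X_{1},Y_{1}],Y_{1}]$ in exactly the unexpanded form demanded by the target, so that its coefficient $\tfrac{1}{4}$ is recognisable as $\tfrac{1}{2}\cdot\tfrac{1}{2}$ coming from $-\tfrac{1}{2}[A,\ast_{2}]$. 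No conceptually new ingredient beyond those already deployed in Lemmas \ref{t4.3.1}--\ref{t4.3.4} should be required.
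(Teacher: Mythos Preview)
Your proposal is correct and follows essentially the same approach as the paper: the same choice of $\ast_{1}$ (the first three additive blocks) and $\ast_{2}$ (the $Y$-block), the same second-order left logarithmic-derivation identity (the paper's formula (\ref{4.3.5.1})), and the same computation of $[\ast_{1},\ast_{2}]$ and $[\ast_{1},\ast_{1},\ast_{2}]$. Your explicit remark that the $d_{3}$-linear parts of $\ast_{1}$ may be dropped in the bracket computations because $d_{3}^{2}=0$ is a helpful clarification that the paper leaves implicit.
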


\begin{proof}
Letting $\ast_{1}$ denote
\begin{align*}
& \,\left(  d_{1}+d_{2}\right)  \left(  X_{1}+Y_{1}\right)  +\frac{1}%
{2}\left(  d_{1}+d_{2}\right)  ^{2}\left(  X_{2}+Y_{2}+\left[  X_{1}%
,Y_{1}\right]  \right)  +\\
& d_{3}\left\{  X_{1}+\left(  d_{1}+d_{2}\right)  X_{2}+\frac{1}{2}\left(
d_{1}+d_{2}\right)  ^{2}\left(  X_{3}+\frac{1}{2}\left[  X_{1},X_{2}\right]
\right)  \right\}  -\\
& d_{3}\left\{
\begin{array}
[c]{c}%
\frac{1}{2}\left(  d_{1}+d_{2}\right)  \left[  Y_{1},X_{1}\right]  +\\
\frac{1}{2}\left(  d_{1}+d_{2}\right)  ^{2}\left(
\begin{array}
[c]{c}%
\left[  X_{1}+Y_{1},X_{2}\right]  +\frac{1}{2}\left[  X_{2}+Y_{2}+\left[
X_{1},Y_{1}\right]  ,X_{1}\right]  +\\
\frac{1}{3}\left[  X_{1}+Y_{1},Y_{1},X_{1}\right]
\end{array}
\right)
\end{array}
\right\}
\end{align*}
and letting $\ast_{2}$ denote
\[
Y_{1}+\left(  d_{1}+d_{2}\right)  Y_{2}+\frac{1}{2}\left(  d_{1}+d_{2}\right)
^{2}\left(  Y_{3}-\frac{1}{2}\left[  Y_{1},Y_{2}\right]  \right)  \text{,}%
\]
we have
\begin{align}
& \exp\,\ast_{1}+d_{3}\ast_{2}\nonumber\\
& =\exp\,\ast_{1}.\exp\,d_{3}\left(  \ast_{2}-\frac{1}{2}\left[  \ast_{1}%
,\ast_{2}\right]  +\frac{1}{6}\left[  \ast_{1},\ast_{1},\ast_{2}\right]
\right) \label{4.3.5.1}%
\end{align}
by left logarithmic derivation. By the way, we have the following:
\[
\left[  \ast_{1},\ast_{2}\right]  =\left(  d_{1}+d_{2}\right)  \left[
X_{1},Y_{1}\right]  +\left(  d_{1}+d_{2}\right)  ^{2}\left(  \left[
X_{1}+Y_{1},Y_{2}\right]  +\frac{1}{2}\left[  X_{2}+Y_{2}+\left[  X_{1}%
,Y_{1}\right]  ,Y_{1}\right]  \right)
\]
\[
\left[  \ast_{1},\ast_{1},\ast_{2}\right]  =\left(  d_{1}+d_{2}\right)
^{2}\left[  X_{1}+Y_{1},X_{1},Y_{1}\right]
\]
\begin{align*}
& -\frac{1}{2}\left[  \ast_{1},\ast_{2}\right]  +\frac{1}{6}\left[  \ast
_{1},\ast_{1},\ast_{2}\right] \\
& =-\frac{1}{2}\left(  d_{1}+d_{2}\right)  \left[  X_{1},Y_{1}\right]  -\\
& \frac{1}{2}\left(  d_{1}+d_{2}\right)  ^{2}\left(  \left[  X_{1}+Y_{1}%
,Y_{2}\right]  +\frac{1}{2}\left[  X_{2}+Y_{2}+\left[  X_{1},Y_{1}\right]
,Y_{1}\right]  -\frac{1}{3}\left[  X_{1}+Y_{1},Y_{1},X_{1}\right]  \right)
\end{align*}
Therefore the desired formula follows.
\end{proof}

\begin{lemma}
\label{t4.3.6}
\begin{align*}
& \exp\,\left(  d_{1}+d_{2}\right)  \left(  X_{1}+Y_{1}\right)  +\frac{1}%
{2}\left(  d_{1}+d_{2}\right)  ^{2}\left(  X_{2}+Y_{2}+\left[  X_{1}%
,Y_{1}\right]  \right)  +\\
& d_{3}\left\{  X_{1}+\left(  d_{1}+d_{2}\right)  X_{2}+\left(  d_{1}%
+d_{2}\right)  ^{2}\left(  \frac{1}{2}X_{3}+\frac{1}{4}\left[  X_{1}%
,X_{2}\right]  \right)  \right\}  -\\
& d_{3}\left\{
\begin{array}
[c]{c}%
\frac{1}{2}\left(  d_{1}+d_{2}\right)  \left[  Y_{1},X_{1}\right]  +\\
\frac{1}{2}\left(  d_{1}+d_{2}\right)  ^{2}\left(
\begin{array}
[c]{c}%
\left[  X_{1}+Y_{1},X_{2}\right]  +\frac{1}{2}\left[  X_{2}+Y_{2}+\left[
X_{1},Y_{1}\right]  ,X_{1}\right]  +\\
\frac{1}{3}\left[  X_{1}+Y_{1},Y_{1},X_{1}\right]
\end{array}
\right)
\end{array}
\right\}  +\\
& d_{3}\left\{  Y_{1}+\left(  d_{1}+d_{2}\right)  Y_{2}+\frac{1}{2}\left(
d_{1}+d_{2}\right)  ^{2}\left(  Y_{3}-\frac{1}{2}\left[  Y_{1},Y_{2}\right]
\right)  \right\}  +\\
& d_{3}\left\{
\begin{array}
[c]{c}%
\frac{1}{2}\left(  d_{1}+d_{2}\right)  \left[  X_{1},Y_{1}\right]  +\\
\frac{1}{2}\left(  d_{1}+d_{2}\right)  ^{2}\left(
\begin{array}
[c]{c}%
\left[  X_{1}+Y_{1},Y_{2}\right]  +\frac{1}{2}\left[  X_{2}+Y_{2}+\left[
X_{1},Y_{1}\right]  ,Y_{1}\right]  -\\
\frac{1}{3}\left[  X_{1}+Y_{1},Y_{1},X_{1}\right]
\end{array}
\right)
\end{array}
\right\} \\
& =\exp\,\left(  d_{1}+d_{2}\right)  \left(  X_{1}+Y_{1}\right)  +\frac{1}%
{2}\left(  d_{1}+d_{2}\right)  ^{2}\left(  X_{2}+Y_{2}+\left[  X_{1}%
,Y_{1}\right]  \right)  +\\
& d_{3}\left\{  X_{1}+\left(  d_{1}+d_{2}\right)  X_{2}+\frac{1}{2}\left(
d_{1}+d_{2}\right)  ^{2}\left(  X_{3}+\frac{1}{2}\left[  X_{1},X_{2}\right]
\right)  \right\}  -\\
& d_{3}\left\{
\begin{array}
[c]{c}%
\frac{1}{2}\left(  d_{1}+d_{2}\right)  \left[  Y_{1},X_{1}\right]  +\\
\frac{1}{2}\left(  d_{1}+d_{2}\right)  ^{2}\left(
\begin{array}
[c]{c}%
\left[  X_{1}+Y_{1},X_{2}\right]  +\frac{1}{2}\left[  X_{2}+Y_{2}+\left[
X_{1},Y_{1}\right]  ,X_{1}\right]  +\\
\frac{1}{3}\left[  X_{1}+Y_{1},Y_{1},X_{1}\right]
\end{array}
\right)
\end{array}
\right\}  +\\
& d_{3}\left\{  Y_{1}+\left(  d_{1}+d_{2}\right)  Y_{2}+\frac{1}{2}\left(
d_{1}+d_{2}\right)  ^{2}\left(  Y_{3}-\frac{1}{2}\left[  Y_{1},Y_{2}\right]
\right)  \right\}  .\\
& \exp\,d_{3}\left\{
\begin{array}
[c]{c}%
\frac{1}{2}\left(  d_{1}+d_{2}\right)  \left[  X_{1},Y_{1}\right]  +\\
\frac{1}{2}\left(  d_{1}+d_{2}\right)  ^{2}\left(
\begin{array}
[c]{c}%
\left[  X_{1}+Y_{1},Y_{2}\right]  +\frac{1}{2}\left[  X_{2}+Y_{2}+\left[
X_{1},Y_{1}\right]  ,Y_{1}\right]  -\\
\frac{1}{3}\left[  X_{1}+Y_{1},Y_{1},X_{1}\right]  -\\
\frac{1}{2}\left[  X_{1}+Y_{1},X_{1},Y_{1}\right]
\end{array}
\right)
\end{array}
\right\}
\end{align*}

\end{lemma}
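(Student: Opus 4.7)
The plan is to follow the template already set up by Lemmas \ref{t4.3.3}--\ref{t4.3.5}: view the left-hand side as $\exp(\ast_{1}+d_{3}\ast_{2})$ for a suitable split, and invoke left logarithmic derivation (the identity (\ref{4.3.5.1})). Specifically, let $\ast_{1}$ denote the full argument of the outermost $\exp$ on the left-hand side with its \emph{last} $d_{3}\{\ldots\}$ summand removed, so that $\ast_{1}$ coincides verbatim with the argument of the first $\exp$ on the right-hand side. Let $\ast_{2}$ denote the coefficient of $d_{3}$ in the removed summand, namely
\[
\ast_{2}=\tfrac{1}{2}(d_{1}+d_{2})[X_{1},Y_{1}]+\tfrac{1}{2}(d_{1}+d_{2})^{2}\Bigl([X_{1}+Y_{1},Y_{2}]+\tfrac{1}{2}[X_{2}+Y_{2}+[X_{1},Y_{1}],Y_{1}]-\tfrac{1}{3}[X_{1}+Y_{1},Y_{1},X_{1}]\Bigr).
\]

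The identity (\ref{4.3.5.1}) will then yield
\[
\exp\,(\ast_{1}+d_{3}\ast_{2})=\exp\,\ast_{1}\,.\,\exp\,d_{3}\Bigl(\ast_{2}-\tfrac{1}{2}[\ast_{1},\ast_{2}]+\tfrac{1}{6}[\ast_{1},\ast_{1},\ast_{2}]\Bigr),
\]
so the whole argument reduces to checking that $-\tfrac{1}{2}[\ast_{1},\ast_{2}]+\tfrac{1}{6}[\ast_{1},\ast_{1},\ast_{2}]$ contributes exactly $-\tfrac{1}{4}(d_{1}+d_{2})^{2}[X_{1}+Y_{1},X_{1},Y_{1}]$. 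That is precisely the discrepancy between $\ast_{2}$ and the coefficient of $d_{3}$ inside the second $\exp$ on the right-hand side.

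This final check is the place where the nilpotencies $d_{1}^{2}=d_{2}^{2}=d_{3}^{2}=0$ do all the work. Since $[\ast_{1},\ast_{2}]$ sits inside $\exp\,d_{3}(\cdots)$, every summand of $\ast_{1}$ that already carries a $d_{3}$ factor is annihilated, and every summand of $\ast_{2}$ of order $2$ in $(d_{1}+d_{2})$ is killed upon bracketing against the order-$1$ part $(d_{1}+d_{2})(X_{1}+Y_{1})$ of $\ast_{1}$, because $(d_{1}+d_{2})^{3}=0$. The surviving contribution is
\[
[\ast_{1},\ast_{2}]=\bigl[(d_{1}+d_{2})(X_{1}+Y_{1}),\,\tfrac{1}{2}(d_{1}+d_{2})[X_{1},Y_{1}]\bigr]=\tfrac{1}{2}(d_{1}+d_{2})^{2}[X_{1}+Y_{1},X_{1},Y_{1}],
\]
while $[\ast_{1},\ast_{1},\ast_{2}]$ vanishes outright, as one further $(d_{1}+d_{2})$ factor forces $(d_{1}+d_{2})^{3}=0$. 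Multiplying by $-\tfrac{1}{2}$ produces exactly the required correction.

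The main obstacle is not conceptual but purely organizational: $\ast_{1}$ is a substantial expression carrying four distinct $d_{3}$-bearing summands along with two order-graded $(d_{1}+d_{2})$-summands, and one has to trust (or patiently verify) that every cross-term with $\ast_{2}$ is killed either by $d_{3}^{2}=0$ or by $(d_{1}+d_{2})^{3}=0$. Once that bookkeeping is accepted, substitution of the computed bracket into the left-logarithmic-derivation identity reproduces the claimed right-hand side verbatim, completing the proof.
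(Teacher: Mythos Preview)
Your proposal is correct and follows essentially the same route as the paper's own proof: the paper also splits the left-hand side as $\exp(\ast_{1}+d_{3}\ast_{2})$ with exactly your choice of $\ast_{1}$ and $\ast_{2}$, computes $[\ast_{1},\ast_{2}]=\tfrac{1}{2}(d_{1}+d_{2})^{2}[X_{1}+Y_{1},X_{1},Y_{1}]$, and substitutes into the left-logarithmic-derivation identity. The only cosmetic difference is that the paper cites the first-order identity (\ref{4.3.2.1}) directly (implicitly using that the iterated bracket vanishes), whereas you cite (\ref{4.3.5.1}) and then explicitly argue $[\ast_{1},\ast_{1},\ast_{2}]=0$; the content is identical.
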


\begin{proof}
Letting $\ast_{1}$ denote
\begin{align*}
& \left(  d_{1}+d_{2}\right)  \left(  X_{1}+Y_{1}\right)  +\frac{1}{2}\left(
d_{1}+d_{2}\right)  ^{2}\left(  X_{2}+Y_{2}+\left[  X_{1},Y_{1}\right]
\right)  +\\
& d_{3}\left\{  X_{1}+\left(  d_{1}+d_{2}\right)  X_{2}+\frac{1}{2}\left(
d_{1}+d_{2}\right)  ^{2}\left(  X_{3}+\frac{1}{2}\left[  X_{1},X_{2}\right]
\right)  \right\}  -\\
& d_{3}\left\{
\begin{array}
[c]{c}%
\frac{1}{2}\left(  d_{1}+d_{2}\right)  \left[  Y_{1},X_{1}\right]  +\\
\frac{1}{2}\left(  d_{1}+d_{2}\right)  ^{2}\left(
\begin{array}
[c]{c}%
\left[  X_{1}+Y_{1},X_{2}\right]  +\frac{1}{2}\left[  X_{2}+Y_{2}+\left[
X_{1},Y_{1}\right]  ,X_{1}\right]  +\\
\frac{1}{3}\left[  X_{1}+Y_{1},Y_{1},X_{1}\right]
\end{array}
\right)
\end{array}
\right\}  +\\
& d_{3}\left\{  Y_{1}+\left(  d_{1}+d_{2}\right)  Y_{2}+\frac{1}{2}\left(
d_{1}+d_{2}\right)  ^{2}\left(  Y_{3}-\frac{1}{2}\left[  Y_{1},Y_{2}\right]
\right)  \right\}
\end{align*}
and letting $\ast_{2}$ denote
\begin{align*}
& \frac{1}{2}\left(  d_{1}+d_{2}\right)  \left[  X_{1},Y_{1}\right]  +\\
& \frac{1}{2}\left(  d_{1}+d_{2}\right)  ^{2}\left(  \left[  X_{1}+Y_{1}%
,Y_{2}\right]  +\frac{1}{2}\left[  X_{2}+Y_{2}+\left[  X_{1},Y_{1}\right]
,Y_{1}\right]  -\frac{1}{3}\left[  X_{1}+Y_{1},Y_{1},X_{1}\right]  \right)
\end{align*}
we have (\ref{4.3.2.1}). By the way, we have the following:
\[
\left[  \ast_{1},\ast_{2}\right]  =\frac{1}{2}\left(  d_{1}+d_{2}\right)
^{2}\left[  X_{1}+Y_{1},X_{1},Y_{1}\right]
\]
Therefore the desired formula follows at once.
\end{proof}

\section{\label{s6}Associativity}

From now on, $\mathfrak{g}$\ shall be an arbitrary Lie algebra not necessarily
coming from a Lie group as its Lie algebra. The principal objective in the
rest of this paper is to show that the spaces $\left(  \mathfrak{g}^{D_{n}%
}\right)  _{0}\,\left(  n=1,2,3\right)  $ are naturally endowed with Lie group
structures, which can be regarded as the Weil prolongations of a mythical
(i.e., not necessarily existing) Lie group whose Lie algebra is supposed to be
$\mathfrak{g}$. This section aims to demonstrate that the spaces $\left(
\mathfrak{g}^{D_{n}}\right)  _{0}\,\left(  n=1,2,3\right)  $ are naturally
endowed with \textit{associative} binary operations. First of all, let us
define binary operations on them.

\begin{definition}
\label{d6.1}Inspired by Theorems \ref{t4.1}, \ref{t4.2} and \ref{t4.3}, we
will define a binary operation on $\left(  \mathfrak{g}^{D_{n}}\right)
_{0}\,\left(  n=1,2,3\right)  $ as follows:

\begin{enumerate}
\item Given $dX_{1},dY_{1}\in\left(  \mathfrak{g}^{D_{1}}\right)  _{0}$, we
define
\[
dX_{1}.dY_{1}%
\]
to be
\[
d\left(  X_{1}+Y_{1}\right)
\]

\item Given $dX_{1}+\frac{1}{2}d^{2}X_{2},dY_{1}+\frac{1}{2}d^{2}Y_{2}%
\in\left(  \mathfrak{g}^{D_{2}}\right)  _{0}$, we define
\[
dX_{1}+\frac{1}{2}d^{2}X_{2}.dY_{1}+\frac{1}{2}d^{2}Y_{2}%
\]
to be
\[
d\left(  X_{1}+Y_{1}\right)  +\frac{1}{2}d^{2}\left(  X_{2}+Y_{2}+\left[
X_{1},Y_{1}\right]  \right)
\]

\item Given $dX_{1}+\frac{1}{2}d^{2}X_{2}+\frac{1}{6}d^{3}X_{3},dY_{1}%
+\frac{1}{2}d^{2}Y_{2}+\frac{1}{6}d^{3}Y_{3}\in\left(  \mathfrak{g}^{D_{3}%
}\right)  _{0}$, we define
\[
dX_{1}+\frac{1}{2}d^{2}X_{2}+\frac{1}{6}d^{3}X_{3}.dY_{1}+\frac{1}{2}%
d^{2}Y_{2}+\frac{1}{6}d^{3}Y_{3}%
\]
to be
\begin{align*}
& d\left(  X_{1}+Y_{1}\right)  +\frac{1}{2}d^{2}\left(  X_{2}+Y_{2}+\left[
X_{1},Y_{1}\right]  \right)  +\\
& \frac{1}{6}d^{3}\left\{  \left(  X_{3}+Y_{3}\right)  +\frac{3}{2}\left(
\left[  X_{1},Y_{2}\right]  +\left[  X_{2},Y_{1}\right]  \right)  +\frac{1}%
{2}\left[  X_{1}-Y_{1},X_{1},Y_{1}\right]  \right\}
\end{align*}

\end{enumerate}
\end{definition}

The principal objective in this section is to show that the above binary
operations are all associative. It should be obvious that

\begin{theorem}
\label{t6.1}
\[
\left(  dX_{1}.dY_{1}\right)  .dZ_{1}=dX_{1}.\left(  dY_{1}.dZ_{1}\right)
\]

\end{theorem}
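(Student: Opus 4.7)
The plan is to unfold both sides of the claimed equality using clause (1) of Definition \ref{d6.1} and reduce the statement to associativity of addition in $\mathfrak{g}$. First I would compute
\[
(dX_{1}.dY_{1}).dZ_{1} = d(X_{1}+Y_{1}).dZ_{1} = d((X_{1}+Y_{1})+Z_{1}),
\]
and on the other side
\[
dX_{1}.(dY_{1}.dZ_{1}) = dX_{1}.d(Y_{1}+Z_{1}) = d(X_{1}+(Y_{1}+Z_{1})).
\]
Since $\mathfrak{g}$ is a Euclidean $\mathbb{R}$-module, addition in its underlying abelian group is associative, so both expressions coincide with the single element $d(X_{1}+Y_{1}+Z_{1})$. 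By the uniqueness clause of Theorem \ref{t2.1}, an element of $(\mathfrak{g}^{D_{1}})_{0}$ is determined by its linear coefficient, so the two sides agree as functions on $D_{1}$.

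There is essentially no obstacle here, which is why the paper prefaces the statement with ``It should be obvious that'': at the $D_{1}$ level the binary operation of Definition \ref{d6.1} collapses to ordinary vector addition, and associativity simply inherits from module associativity. The genuine difficulty will surface in the analogous results at $D_{2}$ and $D_{3}$, where the bracket corrections dictated by clauses (2) and (3) of Definition \ref{d6.1} enter and associativity becomes a nontrivial identity of iterated brackets mirroring the generalized Baker--Campbell--Hausdorff formulas of \S\ref{s4}.
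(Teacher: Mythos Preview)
Your proposal is correct and matches the paper's treatment: the paper gives no proof at all for Theorem~\ref{t6.1}, merely prefacing it with ``It should be obvious that,'' and your unfolding via Definition~\ref{d6.1}(1) to associativity of addition in $\mathfrak{g}$ is exactly the trivial verification the reader is expected to supply.
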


\begin{theorem}
\label{t6.2}
\begin{align*}
& \left(  dX_{1}+\frac{1}{2}d^{2}X_{2}.dY_{1}+\frac{1}{2}d^{2}Y_{2}\right)
.dZ_{1}+\frac{1}{2}d^{2}Z_{2}\\
& =dX_{1}+\frac{1}{2}d^{2}X_{2}.\left(  dY_{1}+\frac{1}{2}d^{2}Y_{2}%
.dZ_{1}+\frac{1}{2}d^{2}Z_{2}\right)
\end{align*}

\end{theorem}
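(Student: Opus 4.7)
The plan is to verify associativity by direct expansion of both sides using Definition \ref{d6.1}(2) and the bilinearity of the Lie bracket. No appeal to the Jacobi identity or to any Baker--Campbell--Hausdorff lemma from Section \ref{s4} should be needed at this level, because the only correction term in the binary operation on $\left(\mathfrak{g}^{D_{2}}\right)_{0}$ is the single bracket $[X_{1},Y_{1}]$.

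First I would compute the left-hand side in two stages. Applying the rule (2) to the inner factor yields
\[
dX_{1}+\tfrac{1}{2}d^{2}X_{2}.dY_{1}+\tfrac{1}{2}d^{2}Y_{2}
=d\left(X_{1}+Y_{1}\right)+\tfrac{1}{2}d^{2}\left(X_{2}+Y_{2}+[X_{1},Y_{1}]\right).
\]
Applying (2) once more with $dZ_{1}+\tfrac{1}{2}d^{2}Z_{2}$ and expanding $[X_{1}+Y_{1},Z_{1}]=[X_{1},Z_{1}]+[Y_{1},Z_{1}]$ by bilinearity gives
\[
d\left(X_{1}+Y_{1}+Z_{1}\right)+\tfrac{1}{2}d^{2}\left(X_{2}+Y_{2}+Z_{2}+[X_{1},Y_{1}]+[X_{1},Z_{1}]+[Y_{1},Z_{1}]\right).
\]

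Next I would carry out the symmetric computation for the right-hand side. The inner factor $dY_{1}+\tfrac{1}{2}d^{2}Y_{2}.dZ_{1}+\tfrac{1}{2}d^{2}Z_{2}$ equals $d(Y_{1}+Z_{1})+\tfrac{1}{2}d^{2}(Y_{2}+Z_{2}+[Y_{1},Z_{1}])$, and composing on the left with $dX_{1}+\tfrac{1}{2}d^{2}X_{2}$ produces the correction $[X_{1},Y_{1}+Z_{1}]=[X_{1},Y_{1}]+[X_{1},Z_{1}]$, again by bilinearity. The result is identical to the expression displayed above, which establishes associativity.

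There is essentially no obstacle here: the computation is purely formal, and the three cross-brackets $[X_{1},Y_{1}]$, $[X_{1},Z_{1}]$, $[Y_{1},Z_{1}]$ appear symmetrically on both sides because only one of the three variables interacts nontrivially with each of the other two through a single bracket. The real difficulty will only arise for the $n=3$ case (Theorem \ref{t6.3}, presumably next), where the cubic correction term involves $[X_{1}-Y_{1},X_{1},Y_{1}]$ and its rearrangement across an associator will require the Jacobi identity together with careful bookkeeping of iterated brackets.
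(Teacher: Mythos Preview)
Your proposal is correct and mirrors the paper's own proof essentially line for line: both sides are expanded in two stages via Definition~\ref{d6.1}(2), and bilinearity of the bracket reduces each to $d(X_{1}+Y_{1}+Z_{1})+\tfrac{1}{2}d^{2}\bigl(X_{2}+Y_{2}+Z_{2}+[X_{1},Y_{1}]+[X_{1},Z_{1}]+[Y_{1},Z_{1}]\bigr)$. Your remark that the Jacobi identity is not needed here but will be for $n=3$ is also exactly what the paper confirms in Theorem~\ref{t6.3} and Lemma~\ref{t6.3.1}.
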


\begin{proof}
We have
\begin{align*}
& \left(  dX_{1}+\frac{1}{2}d^{2}X_{2}.dY_{1}+\frac{1}{2}d^{2}Y_{2}\right)
.dZ_{1}+\frac{1}{2}d^{2}Z_{2}\\
& =d\left(  X_{1}+Y_{1}\right)  +\frac{1}{2}d^{2}\left(  X_{2}+Y_{2}+\left[
X_{1},Y_{1}\right]  \right)  .dZ_{1}+\frac{1}{2}d^{2}Z_{2}\\
& =d\left(  X_{1}+Y_{1}+Z_{1}\right)  +\frac{1}{2}d^{2}\left(  X_{2}%
+Y_{2}+\left[  X_{1},Y_{1}\right]  +Z_{2}+\left[  X_{1}+Y_{1},Z_{1}\right]
\right) \\
& =d\left(  X_{1}+Y_{1}+Z_{1}\right)  +\frac{1}{2}d^{2}\left(  X_{2}%
+Y_{2}+Z_{2}+\left[  X_{1},Y_{1}\right]  +\left[  X_{1},Z_{1}\right]  +\left[
Y_{1},Z_{1}\right]  \right)
\end{align*}
on the one hand, while we have
\begin{align*}
& dX_{1}+\frac{1}{2}d^{2}X_{2}.\left(  dY_{1}+\frac{1}{2}d^{2}Y_{2}%
.dZ_{1}+\frac{1}{2}d^{2}Z_{2}\right) \\
& =dX_{1}+\frac{1}{2}d^{2}X_{2}.d\left(  Y_{1}+Z_{1}\right)  +\frac{1}{2}%
d^{2}\left(  Y_{2}+Z_{2}+\left[  Y_{1},Z_{1}\right]  \right) \\
& =d\left(  X_{1}+Y_{1}+Z_{1}\right)  +\frac{1}{2}d^{2}\left(  X_{2}%
+Y_{2}+Z_{2}+\left[  Y_{1},Z_{1}\right]  +\left[  X_{1},Y_{1}+Z_{1}\right]
\right) \\
& =d\left(  X_{1}+Y_{1}+Z_{1}\right)  +\frac{1}{2}d^{2}\left(  X_{2}%
+Y_{2}+Z_{2}+\left[  X_{1},Y_{1}\right]  +\left[  X_{1},Z_{1}\right]  +\left[
Y_{1},Z_{1}\right]  \right)
\end{align*}
on the other.
\end{proof}

\begin{theorem}
\label{t6.3}
\begin{align*}
& \left(  dX_{1}+\frac{1}{2}d^{2}X_{2}+\frac{1}{6}d^{3}X_{3}.dY_{1}+\frac
{1}{2}d^{2}Y_{2}+\frac{1}{6}d^{3}Y_{3}\right)  .dZ_{1}+\frac{1}{2}d^{2}%
Z_{2}+\frac{1}{6}d^{3}Z_{3}\\
& =dX_{1}+\frac{1}{2}d^{2}X_{2}+\frac{1}{6}d^{3}X_{3}.\left(  dY_{1}+\frac
{1}{2}d^{2}Y_{2}+\frac{1}{6}d^{3}Y_{3}.dZ_{1}+\frac{1}{2}d^{2}Z_{2}+\frac
{1}{6}d^{3}Z_{3}\right)
\end{align*}

\end{theorem}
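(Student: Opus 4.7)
The plan is to follow the template of Theorem \ref{t6.2}: apply Definition \ref{d6.1}(3) twice on each side, and reduce the claim to an identity in $\mathfrak{g}$ that can be verified by bilinearity, antisymmetry, and the Jacobi identity of the Lie bracket. Let $X=dX_{1}+\tfrac{1}{2}d^{2}X_{2}+\tfrac{1}{6}d^{3}X_{3}$ and define $Y,Z$ analogously, and write $A=X.Y$ for the inner product on the left-hand side and $B=Y.Z$ for the inner product on the right. The theorem becomes the assertion $A.Z = X.B$, which splits into three separate identities in $\mathfrak{g}$, one for each of the coefficients of $d$, $d^{2}$, $d^{3}$.

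The $d$-coefficient is $X_{1}+Y_{1}+Z_{1}$ on both sides by inspection, and the $d^{2}$-coefficient reduces on each side to $X_{2}+Y_{2}+Z_{2}+[X_{1},Y_{1}]+[X_{1},Z_{1}]+[Y_{1},Z_{1}]$ by precisely the computation carried out in the proof of Theorem \ref{t6.2}; the third-order inputs $X_{3},Y_{3},Z_{3}$ contribute nothing to the lower-order pieces. Hence the whole content of the theorem is concentrated in the $d^{3}$-coefficient.

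For the $d^{3}$-coefficient I would substitute the values of $A_{1},A_{2},A_{3}$ and $B_{1},B_{2},B_{3}$ read off from Definition \ref{d6.1}(3), expand every Lie bracket by bilinearity, and sort the resulting terms into three types: (i) the linear part $X_{3}+Y_{3}+Z_{3}$; (ii) mixed brackets of a first-order input with a second-order input such as $[X_{1},Y_{2}]$ or $[X_{2},Z_{1}]$; and (iii) iterated triple brackets in the first-order inputs $X_{1},Y_{1},Z_{1}$. Types (i) and (ii) match on the two sides by direct bilinearity inspection; the coefficient of each mixed bracket comes out to $\tfrac{3}{2}$ on both sides.

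The main obstacle is type (iii). On the left-hand side, triple brackets arise from the intrinsic term $\tfrac{1}{2}[X_{1}-Y_{1},X_{1},Y_{1}]$ built into $A_{3}$, from the nested bracket $\tfrac{3}{2}[[X_{1},Y_{1}],Z_{1}]$ produced by $[A_{2},Z_{1}]$, and from the full expansion of $\tfrac{1}{2}[A_{1}-Z_{1},A_{1},Z_{1}]$; symmetric triple brackets arise on the right from the mirror-image pieces in $B$. These contributions appear in different orderings on the two sides, and reducing their difference to zero requires the Jacobi identity in the form $[[U,V],W]=[U,V,W]-[V,U,W]$ together with antisymmetry, which after straightforward cancellation collapses the residual short combination of $[X_{1},Y_{1},Z_{1}]$, $[Y_{1},X_{1},Z_{1}]$ and $[Z_{1},X_{1},Y_{1}]$ to $0$. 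The algebra is elementary, but the bookkeeping of which term arises from which expansion is the chief difficulty.
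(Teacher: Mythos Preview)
Your proposal is correct and follows essentially the same approach as the paper: compute both sides by two applications of Definition~\ref{d6.1}(3), observe that the $d$- and $d^{2}$-coefficients match as in Theorem~\ref{t6.2}, and reduce the $d^{3}$-coefficient to a triple-bracket identity settled by the Jacobi identity. The paper carries out the expansion explicitly and isolates the residual identity as a separate lemma (Lemma~\ref{t6.3.1}), namely $\tfrac{3}{2}[[X_{1},Y_{1}],Z_{1}]+\tfrac{1}{2}([X_{1},Y_{1},Z_{1}]+[Y_{1},X_{1},Z_{1}])=\tfrac{3}{2}[X_{1},Y_{1},Z_{1}]-\tfrac{1}{2}([Y_{1},X_{1},Z_{1}]+[Z_{1},X_{1},Y_{1}])$, which is exactly the ``residual short combination'' you describe.
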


\begin{proof}
We have
\begin{align*}
& \left(  dX_{1}+\frac{1}{2}d^{2}X_{2}+\frac{1}{6}d^{3}X_{3}.dY_{1}+\frac
{1}{2}d^{2}Y_{2}+\frac{1}{6}d^{3}Y_{3}\right)  .dZ_{1}+\frac{1}{2}d^{2}%
Z_{2}+\frac{1}{6}d^{3}Z_{3}\\
& =d\left(  X_{1}+Y_{1}\right)  +\frac{1}{2}d^{2}\left(  X_{2}+Y_{2}+\left[
X_{1},Y_{1}\right]  \right)  +\\
& \frac{1}{6}d^{3}\left\{  \left(  X_{3}+Y_{3}\right)  +\frac{3}{2}\left(
\left[  X_{1},Y_{2}\right]  +\left[  X_{2},Y_{1}\right]  \right)  +\frac{1}%
{2}\left[  X_{1}-Y_{1},X_{1},Y_{1}\right]  \right\}  .\\
& dZ_{1}+\frac{1}{2}d^{2}Z_{2}+\frac{1}{6}d^{3}Z_{3}\\
& =d\left(  X_{1}+Y_{1}+Z_{1}\right)  +\frac{1}{2}d^{2}\left(  X_{2}%
+Y_{2}+\left[  X_{1},Y_{1}\right]  +Z_{2}+\left[  X_{1}+Y_{1},Z_{1}\right]
\right)  +\\
& \frac{1}{6}d^{3}\left\{
\begin{array}
[c]{c}%
\left(  X_{3}+Y_{3}\right)  +\frac{3}{2}\left(  \left[  X_{1},Y_{2}\right]
+\left[  X_{2},Y_{1}\right]  \right)  +\frac{1}{2}\left[  X_{1}-Y_{1}%
,X_{1},Y_{1}\right]  +Z_{3}+\\
\frac{3}{2}\left(  [X_{1}+Y_{1},Z_{2}]+\left[  X_{2}+Y_{2}+\left[  X_{1}%
,Y_{1}\right]  ,Z_{1}\right]  \right)  +\\
\frac{1}{2}\left[  X_{1}+Y_{1}-Z_{1},\left[  X_{1}+Y_{1},Z_{1}\right]
\right]
\end{array}
\right\} \\
& =d\left(  X_{1}+Y_{1}+Z_{1}\right)  +\frac{1}{2}d^{2}\left(  X_{2}%
+Y_{2}+Z_{2}+\left[  X_{1},Y_{1}\right]  +\left[  X_{1},Z_{1}\right]  +\left[
Y_{1},Z_{1}\right]  \right)  +\\
& \frac{1}{6}d^{3}\left(
\begin{array}
[c]{c}%
X_{3}+Y_{3}+Z_{3}+\\
\frac{3}{2}\left(  \left[  X_{1},Y_{2}\right]  +[X_{1},Z_{2}]+[Y_{1}%
,Z_{2}]+\left[  X_{2},Y_{1}\right]  +[X_{2},Z_{1}]+[Y_{2},Z_{1}]\right)  +\\
\frac{1}{2}\left(
\begin{array}
[c]{c}%
\left[  X_{1},X_{1},Y_{1}\right]  +\left[  Y_{1},Y_{1},X_{1}\right]  +\\
\left[  X_{1},X_{1},Z_{1}\right]  +\left[  Z_{1},Z_{1},X_{1}\right]  +\\
\left[  Y_{1},Y_{1},Z_{1}\right]  +\left[  Z_{1},Z_{1},Y_{1}\right]
\end{array}
\right)  +\\
\frac{3}{2}\left[  \left[  X_{1},Y_{1}\right]  ,Z_{1}\right]  +\frac{1}%
{2}\left(  \left[  X_{1},Y_{1},Z_{1}\right]  +\left[  Y_{1},X_{1}%
,Z_{1}\right]  \right)
\end{array}
\right)
\end{align*}
on the one hand, while we have
\begin{align*}
& dX_{1}+\frac{1}{2}d^{2}X_{2}+\frac{1}{6}d^{3}X_{3}.\left(  dY_{1}+\frac
{1}{2}d^{2}Y_{2}+\frac{1}{6}d^{3}Y_{3}.dZ_{1}+\frac{1}{2}d^{2}Z_{2}+\frac
{1}{6}d^{3}Z_{3}\right) \\
& =dX_{1}+\frac{1}{2}d^{2}X_{2}+\frac{1}{6}d^{3}X_{3}.\\
& d\left(  Y_{1}+Z_{1}\right)  +\frac{1}{2}d^{2}\left(  Y_{2}+Z_{2}+\left[
Y_{1},Z_{1}\right]  \right)  +\\
& \frac{1}{6}d^{3}\left\{  \left(  Y_{3}+Z_{3}\right)  +\frac{3}{2}\left(
\left[  Y_{1},Z_{2}\right]  +\left[  Y_{2},Z_{1}\right]  \right)  +\frac{1}%
{2}\left[  Y_{1}-Z_{1},Y_{1},Z_{1}\right]  \right\} \\
& =d\left(  X_{1}+Y_{1}+Z_{1}\right)  +\frac{1}{2}d^{2}\left(  X_{2}%
+Y_{2}+Z_{2}+\left[  Y_{1},Z_{1}\right]  +\left[  X_{1},Y_{1}+Z_{1}\right]
\right)  +\\
& \frac{1}{6}d^{3}\left\{
\begin{array}
[c]{c}%
X_{3}+\left(  Y_{3}+Z_{3}\right)  +\frac{3}{2}\left(  \left[  Y_{1}%
,Z_{2}\right]  +\left[  Y_{2},Z_{1}\right]  \right)  +\frac{1}{2}\left[
Y_{1}-Z_{1},Y_{1},Z_{1}\right]  +\\
\frac{3}{2}\left(  \left[  X_{1},Y_{2}+Z_{2}+\left[  Y_{1},Z_{1}\right]
\right]  +\left[  X_{2},Y_{1}+Z_{1}\right]  \right)  +\\
\frac{1}{2}\left[  X_{1}-\left(  Y_{1}+Z_{1}\right)  ,X_{1},Y_{1}%
+Z_{1}\right]
\end{array}
\right\} \\
& =d\left(  X_{1}+Y_{1}+Z_{1}\right)  +\frac{1}{2}d^{2}\left(  X_{2}%
+Y_{2}+Z_{2}+\left[  X_{1},Y_{1}\right]  +\left[  X_{1},Z_{1}\right]  +\left[
Y_{1},Z_{1}\right]  \right)  +\\
& \frac{1}{6}d^{3}\left\{
\begin{array}
[c]{c}%
X_{3}+Y_{3}+Z_{3}+\\
\frac{3}{2}\left(  \left[  X_{1},Y_{2}\right]  +[X_{1},Z_{2}]+[Y_{1}%
,Z_{2}]+\left[  X_{2},Y_{1}\right]  +[X_{2},Z_{1}]+[Y_{2},Z_{1}]\right)  +\\
\frac{1}{2}\left(
\begin{array}
[c]{c}%
\left[  X_{1},X_{1},Y_{1}\right]  +\left[  Y_{1},Y_{1},X_{1}\right]  +\\
\left[  X_{1},X_{1},Z_{1}\right]  +\left[  Z_{1},Z_{1},X_{1}\right]  +\\
\left[  Y_{1},Y_{1},Z_{1}\right]  +\left[  Z_{1},Z_{1},Y_{1}\right]
\end{array}
\right)  +\\
\frac{3}{2}\left[  X_{1},Y_{1},Z_{1}\right]  -\frac{1}{2}\left(  \left[
Y_{1},X_{1},Z_{1}\right]  +\left[  Z_{1},X_{1},Y_{1}\right]  \right)
\end{array}
\right\}
\end{align*}
on the other hand. Therefore we are well done by the following lemma.
\end{proof}

\begin{lemma}
\label{t6.3.1}We have
\begin{align*}
& \frac{3}{2}\left[  \left[  X_{1},Y_{1}\right]  ,Z_{1}\right]  +\frac{1}%
{2}\left(  \left[  X_{1},Y_{1},Z_{1}\right]  +\left[  Y_{1},X_{1}%
,Z_{1}\right]  \right) \\
& =\frac{3}{2}\left[  X_{1},Y_{1},Z_{1}\right]  -\frac{1}{2}\left(  \left[
Y_{1},X_{1},Z_{1}\right]  +\left[  Z_{1},X_{1},Y_{1}\right]  \right)
\end{align*}

\end{lemma}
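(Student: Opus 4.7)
The plan is to reduce the identity to a single instance of the Jacobi identity after unwrapping the right-nested bracket abbreviation. Recall the notational convention $[A,B,C] = [A,[B,C]]$, so each three-term bracket is already in right-nested form, whereas $[[X_1,Y_1],Z_1]$ is left-nested and needs one step of antisymmetry: $[[X_1,Y_1],Z_1] = -[Z_1,[X_1,Y_1]] = -[Z_1,X_1,Y_1]$.

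First I would rewrite the entire left-hand side in the right-nested form $[\cdot,[\cdot,\cdot]]$. Applying antisymmetry to the first term converts
$\tfrac32[[X_1,Y_1],Z_1]$ into $-\tfrac32[Z_1,X_1,Y_1]$, so the LHS becomes
\[
-\tfrac32[Z_1,X_1,Y_1] + \tfrac12[X_1,Y_1,Z_1] + \tfrac12[Y_1,X_1,Z_1].
\]
The RHS is already right-nested. Subtracting RHS from LHS, the coefficients collect into
\[
-[Z_1,X_1,Y_1] - [X_1,Y_1,Z_1] + [Y_1,X_1,Z_1],
\]
i.e., $-[Z_1,[X_1,Y_1]] - [X_1,[Y_1,Z_1]] + [Y_1,[X_1,Z_1]]$.

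Next I would recognize this combination as the Jacobi identity. The standard Jacobi identity reads
\[
[X_1,[Y_1,Z_1]] + [Y_1,[Z_1,X_1]] + [Z_1,[X_1,Y_1]] = 0,
\]
and using $[Y_1,[Z_1,X_1]] = -[Y_1,[X_1,Z_1]]$ this becomes exactly
\[
[X_1,[Y_1,Z_1]] - [Y_1,[X_1,Z_1]] + [Z_1,[X_1,Y_1]] = 0,
\]
which is the negative of the difference LHS$-$RHS computed above. Hence LHS$=$RHS.

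There is no real obstacle here; the only mild pitfall is keeping track of the sign conventions when swapping between the left-nested bracket $[[X_1,Y_1],Z_1]$ and the right-nested abbreviation $[Z_1,X_1,Y_1]$, and choosing the right variant of Jacobi (with the swapped sign in the middle summand) to match. Once those bookkeeping steps are handled, the identity is a one-line consequence of antisymmetry and Jacobi.
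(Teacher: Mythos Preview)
Your proof is correct and follows essentially the same strategy as the paper: compute the difference of the two sides and reduce it to zero using antisymmetry and the Jacobi identity. The only cosmetic difference is that the paper groups the terms so as to invoke Jacobi twice (on $[[X_1,Y_1],Z_1]-[X_1,Y_1,Z_1]$ and on $[X_1,Y_1,Z_1]+[Z_1,X_1,Y_1]$), whereas you first convert $[[X_1,Y_1],Z_1]$ to $-[Z_1,X_1,Y_1]$ by antisymmetry and then need only a single application of Jacobi.
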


\begin{proof}
As is expected, this follows easily from the Jacobi identity. We have
\begin{align*}
& \left\{  \frac{3}{2}\left[  \left[  X_{1},Y_{1}\right]  ,Z_{1}\right]
+\frac{1}{2}\left(  \left[  X_{1},Y_{1},Z_{1}\right]  +\left[  Y_{1}%
,X_{1},Z_{1}\right]  \right)  \right\}  -\\
& \left\{  \frac{3}{2}\left[  X_{1},Y_{1},Z_{1}\right]  -\frac{1}{2}\left(
\left[  Y_{1},X_{1},Z_{1}\right]  +\left[  Z_{1},X_{1},Y_{1}\right]  \right)
\right\} \\
& =\frac{3}{2}\left(  \left[  \left[  X_{1},Y_{1}\right]  ,Z_{1}\right]
-\left[  X_{1},Y_{1},Z_{1}\right]  \right)  +\\
& \frac{1}{2}\left(  \left[  X_{1},Y_{1},Z_{1}\right]  +\left[  Z_{1}%
,X_{1},Y_{1}\right]  \right)  +\left[  Y_{1},X_{1},Z_{1}\right] \\
& =-\frac{3}{2}\left[  \left[  Z_{1},X_{1}\right]  ,Y_{1}\right]  -\frac{1}%
{2}\left[  Y_{1},Z_{1},X_{1}\right]  +\left[  Y_{1},X_{1},Z_{1}\right] \\
& \left)
\begin{array}
[c]{c}%
\left[  \left[  X_{1},Y_{1}\right]  ,Z_{1}\right]  -\left[  X_{1},Y_{1}%
,Z_{1}\right]  =-\left[  \left[  Z_{1},X_{1}\right]  ,Y_{1}\right]  \text{ }\\
\text{and}\\
\left[  X_{1},Y_{1},Z_{1}\right]  +\left[  Z_{1},X_{1},Y_{1}\right]  =-\left[
Y_{1},Z_{1},X_{1}\right]  \text{ }\\
\text{by the Jacobi identity}%
\end{array}
\right( \\
& =0
\end{align*}

\end{proof}

\section{\label{s7}From Lie Algebras to Lie Groups}

\begin{theorem}
\label{t7.0}The spaces $\left(  \mathfrak{g}^{D_{n}}\right)  _{0}$\ $\left(
n=1,2,3\right)  $ are Lie groups with respect to the binary operations in
Definition \ref{d6.1}.
\end{theorem}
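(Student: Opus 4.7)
The plan is to verify the three group axioms one at a time and then handle microlinearity separately. Associativity is already at hand: Theorems \ref{t6.1}, \ref{t6.2}, and \ref{t6.3} dispose of it for $n=1,2,3$ respectively, so nothing further is required on that front.

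For the identity element in $\left(\mathfrak{g}^{D_{n}}\right)_{0}$, I would take the polynomial all of whose coefficients $X_{i}$ $(1\leq i\leq n)$ vanish, i.e.\ the constant function $0$. Substituting zero coefficients into the formulas in Definition \ref{d6.1} immediately shows that this element acts as a two-sided unit, since every bracket in which a zero factor appears is zero.

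For inverses, I would proceed inductively in $n$. In the $n=1$ case the inverse of $dX_{1}$ is plainly $d(-X_{1})$. In the $n=2$ case, inspection of the formula
\[
dX_{1}+\tfrac{1}{2}d^{2}X_{2}\,.\,dY_{1}+\tfrac{1}{2}d^{2}Y_{2}=d(X_{1}+Y_{1})+\tfrac{1}{2}d^{2}(X_{2}+Y_{2}+[X_{1},Y_{1}])
\]
forces $Y_{1}=-X_{1}$ and, since then $[X_{1},Y_{1}]=0$ by antisymmetry, $Y_{2}=-X_{2}$; and this really gives a two-sided inverse by a direct check. For $n=3$ the candidate inverse is again obtained by negating coefficients: set $Y_{i}=-X_{i}$ $(i=1,2,3)$ and note that in the cubic term of Definition \ref{d6.1} every bracket either contains a repeated $X_{1}$ (whence vanishes by antisymmetry) or pairs like $[X_{1},-X_{2}]+[X_{2},-X_{1}]$ cancel by antisymmetry. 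So inverses exist and are given explicitly by coefficientwise negation; this is the step that requires the bulk of the routine computation but it is entirely parallel to what the associativity proofs already perform.

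Finally, for microlinearity of $\left(\mathfrak{g}^{D_{n}}\right)_{0}$ as a space, I would invoke Theorem \ref{t2.1}: since $\mathfrak{g}$ is a Euclidean $\mathbb{R}$-module, we have $\mathfrak{g}^{D_{n}}=\mathfrak{g}^{n+1}$, whence $\left(\mathfrak{g}^{D_{n}}\right)_{0}$, cut out by the vanishing of the $0$-th coefficient, is naturally isomorphic to $\mathfrak{g}^{n}$. Finite products and function spaces of Euclidean $\mathbb{R}$-modules are Euclidean, and every Euclidean $\mathbb{R}$-module is microlinear, so $\left(\mathfrak{g}^{D_{n}}\right)_{0}$ is microlinear. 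Combined with the group structure verified above, this gives the Lie group structure. The main obstacle is purely bookkeeping in the $n=3$ inverse computation, where one has to be careful with the cubic bracket $\tfrac{1}{2}[X_{1}-Y_{1},X_{1},Y_{1}]$; but since substituting $Y_{i}=-X_{i}$ collapses every bracket by antisymmetry, no nontrivial use of the Jacobi identity is needed here, in contrast with Lemma \ref{t6.3.1}.
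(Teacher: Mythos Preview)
Your argument is essentially the paper's own: associativity from Theorems \ref{t6.1}--\ref{t6.3}, the zero polynomial as unit, coefficientwise negation for inverses, and microlinearity inherited from $\mathfrak{g}$. One small caution: your microlinearity step asserts that ``every Euclidean $\mathbb{R}$-module is microlinear,'' but the paper's Definition of a Lie algebra imposes microlinearity \emph{separately} from the Euclidean condition, so you should not treat it as a consequence; the cleaner route (and the one the paper takes) is simply to say that $\mathfrak{g}$ is microlinear by hypothesis and that microlinearity is stable under finite products, whence $\left(\mathfrak{g}^{D_{n}}\right)_{0}\cong\mathfrak{g}^{n}$ is microlinear.
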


\begin{proof}
The microlinearity of $\left(  \mathfrak{g}^{D_{n}}\right)  _{0}$\ follows
from that of $\mathfrak{g}$. We have already seen that the binary operations
are associative. In order to let us done, we have only to note, in case of
$n=3$ by way of example, that $0$ is the unit element, while the inverse
element of
\[
dX_{1}+\frac{1}{2}d^{2}X_{2}+\frac{1}{6}d^{3}X_{3}\in\left(  \mathfrak{g}%
^{D_{3}}\right)  _{0}%
\]
is
\[
d\left(  -X_{1}\right)  +\frac{1}{2}d^{2}\left(  -X_{2}\right)  +\frac{1}%
{6}d^{3}\left(  -X_{3}\right)
\]

\end{proof}

In order to be sure whether the Lie group structure on $\left(  \mathfrak{g}%
^{D_{n}}\right)  _{0}$\ in Theorem \ref{t7.0}\ is indeed that of an
appropriate Weil prolongation $\left(  G^{D_{n}}\right)  _{1}$\ of a
\textit{mythical} Lie group $G$\ whose Lie algebra is supposed to be
$\mathfrak{g}$, we need to see its Lie algebra in computation.

\begin{theorem}
\label{t7.1}With $d,e_{1},e_{2}\in D$, we have
\[
\left[  de_{1}X_{1},de_{2}Y_{1}\right]  =0\text{,}%
\]
as is expected in Corollary \ref{t3.1.2}.
\end{theorem}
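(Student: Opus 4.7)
The plan is to exploit the fact that the binary operation on $(\mathfrak{g}^{D_{1}})_{0}$ given by Definition~\ref{d6.1}(1), namely $dX_{1}.dY_{1}=d(X_{1}+Y_{1})$, is visibly \emph{abelian}, with inverse of $dX_{1}$ given by $d(-X_{1})$. Hence $(\mathfrak{g}^{D_{1}})_{0}$ is an abelian Lie group, and the theorem should reduce to the standard observation that Lie brackets on an abelian Lie group vanish.

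First I would fix the identification. By Theorem~\ref{t2.1}, an element of $(\mathfrak{g}^{D_{1}})_{0}$ is uniquely of the form $d\mapsto dX_{1}$ for some $X_{1}\in\mathfrak{g}$; by Theorem~\ref{t2.2} (equivalently, by the exponential law together with the base-point condition exactly as in the proof of Corollary~\ref{t3.1.1}), an element of $\mathbf{Lie}\bigl((\mathfrak{g}^{D_{1}})_{0}\bigr)$ corresponds uniquely to a map $(d,e)\mapsto deX_{1}$ with $X_{1}\in\mathfrak{g}$. Thus $de_{1}X_{1}$ and $de_{2}Y_{1}$ are to be read as two typical Lie-algebra elements, expressed through independent infinitesimal parameters $e_{1},e_{2}\in D$ along the tangent direction.

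Next I would invoke the standard SDG characterization of the Lie bracket in a Lie group as the defect of commutativity of the group operation (cf.\ \cite{la}): for tangent vectors $A,B$ at the identity of a Lie group $H$, the bracket $[A,B]$ is extracted from the commutator $A(e_{1}).B(e_{2}).A(e_{1})^{-1}.B(e_{2})^{-1}$. In our situation the commutator collapses, by a single application of the additive group law of Definition~\ref{d6.1}(1), to
\[
d(e_{1}X_{1}+e_{2}Y_{1}-e_{1}X_{1}-e_{2}Y_{1})=0,
\]
i.e.\ the neutral element of $(\mathfrak{g}^{D_{1}})_{0}$, so the bracket must vanish. This agrees with the $n=1$, $X_{0}=Y_{0}=0$ specialization of Corollary~\ref{t3.1.2}, since $d^{2}=0$ in $D_{1}=D$.

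The only step that requires any care is the bookkeeping which identifies $\mathbf{Lie}\bigl((\mathfrak{g}^{D_{1}})_{0}\bigr)$ with the subspace of $\mathfrak{g}^{D_{1}\times D}$ vanishing along both axes; once this is in place, the computation is immediate and, notably, does not call on the heavier generalized Baker--Campbell--Hausdorff machinery of \S\ref{s4}, which will instead be needed for the corresponding checks at $n=2$ and $n=3$. I anticipate no genuine obstacle in this base case.
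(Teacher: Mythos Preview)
Your proposal is correct and follows the same approach the paper uses (implicitly for this case, explicitly for Theorems~\ref{t7.2} and~\ref{t7.3}): compute the group commutator $de_{1}X_{1}.de_{2}Y_{1}.d(-e_{1})X_{1}.d(-e_{2})Y_{1}$ directly from Definition~\ref{d6.1}(1) and observe that it collapses to $0$ because the operation is abelian. The paper in fact states Theorem~\ref{t7.1} without proof, treating it as evident, but your commutator computation is precisely the $n=1$ analogue of the proofs given for the higher cases.
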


\begin{theorem}
\label{t7.2}With $d\in D_{2}$ and $e_{1},e_{2}\in D$, we have
\begin{align*}
& \left[  de_{1}X_{1}+\frac{1}{2}d^{2}e_{1}X_{2},de_{2}Y_{1}+\frac{1}{2}%
d^{2}e_{2}Y_{2}\right] \\
& =d^{2}e\left[  X_{1},Y_{1}\right]  \text{,}%
\end{align*}
as is expected in Corollary \ref{t3.1.2}.
\end{theorem}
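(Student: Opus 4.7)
The plan is to compute the group commutator $f \cdot g \cdot f^{-1} \cdot g^{-1}$ directly in the Lie group $(\mathfrak{g}^{D_{2}})_{0}$ established in Theorem \ref{t7.0}, where $f = de_{1}X_{1} + \frac{1}{2}d^{2}e_{1}X_{2}$ and $g = de_{2}Y_{1} + \frac{1}{2}d^{2}e_{2}Y_{2}$. In the framework of synthetic differential geometry, this commutator realises the Lie bracket shown on the left-hand side (so the $e$ on the right-hand side is to be read as $e_{1}e_{2}$). By the description of the inverse exhibited in the proof of Theorem \ref{t7.0}, we have $f^{-1} = -de_{1}X_{1} - \frac{1}{2}d^{2}e_{1}X_{2}$ and analogously for $g^{-1}$.

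I would proceed by three successive applications of the group law in Definition \ref{d6.1}(2). First I would form $f \cdot g$: the $d$-coefficient becomes $e_{1}X_{1} + e_{2}Y_{1}$, while the $\frac{1}{2}d^{2}$-coefficient picks up the new contribution $[e_{1}X_{1}, e_{2}Y_{1}] = e_{1}e_{2}[X_{1},Y_{1}]$. Next I would right-multiply by $f^{-1}$: the $d$-coefficient collapses to $e_{2}Y_{1}$, and the correction term $[e_{1}X_{1}+e_{2}Y_{1}, -e_{1}X_{1}]$ reduces to $e_{1}e_{2}[X_{1},Y_{1}]$ once the $[X_{1},X_{1}]$-piece is killed by $e_{1}^{2} = 0$. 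Finally I would multiply by $g^{-1}$: the $d$-coefficient vanishes, and the correction $[e_{2}Y_{1}, -e_{2}Y_{1}]$ vanishes as well thanks to $e_{2}^{2}=0$.

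Collecting results, the $e_{1}X_{2}$ and $e_{2}Y_{2}$ contributions cancel against their negatives, while the bracket contributions accumulate to $2e_{1}e_{2}[X_{1},Y_{1}]$, giving the commutator $d^{2}e_{1}e_{2}[X_{1},Y_{1}]$ as desired. This also matches the forecast of Corollary \ref{t3.1.2}, since the convolution bracket of $dX_{1}+\frac{1}{2}d^{2}X_{2}$ and $dY_{1}+\frac{1}{2}d^{2}Y_{2}$ in $\mathfrak{g}^{D_{2}}$ is precisely $d^{2}[X_{1},Y_{1}]$. The only real obstacle here is clerical: one must track signs faithfully (especially between $[X_{1},Y_{1}]$ and $[Y_{1},X_{1}]$) and invoke the annihilation rules $e_{1}^{2}=e_{2}^{2}=0$ at the right moments; there is no genuine difficulty because the Jacobi identity is not even needed, unlike in the higher-order Baker--Campbell--Hausdorff formulas of Section \ref{s4}.
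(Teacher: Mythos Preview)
Your proposal is correct and follows essentially the same approach as the paper: a direct computation of the group commutator $f\cdot g\cdot f^{-1}\cdot g^{-1}$ in $(\mathfrak{g}^{D_2})_0$ using the product from Definition~\ref{d6.1}(2). The only cosmetic difference is that the paper groups the product as $(f\cdot g)\cdot(f^{-1}\cdot g^{-1})$, computing the two halves in parallel before the final multiplication, whereas you associate left-to-right; the arithmetic and the ingredients (cancellation via $e_1^2=e_2^2=0$, no Jacobi identity needed) are identical.
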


\begin{proof}
We have
\begin{align*}
& de_{1}X_{1}+\frac{1}{2}d^{2}e_{1}X_{2}.de_{2}Y_{1}+\frac{1}{2}d^{2}%
e_{2}Y_{2}.d\left(  -e_{1}\right)  X_{1}+\\
& \frac{1}{2}d^{2}\left(  -e_{2}\right)  X_{2}.d\left(  -e_{1}\right)
Y_{1}+\frac{1}{2}d^{2}\left(  -e_{2}\right)  Y_{2}\\
& =d\left(  e_{1}X_{1}+e_{2}Y_{1}\right)  +\frac{1}{2}d^{2}\left(  e_{1}%
X_{2}+e_{2}Y_{2}+e_{1}e_{2}\left[  X_{1},Y_{1}\right]  \right)  .\\
& d\left(  -e_{1}X_{1}-e_{2}Y_{1}\right)  +\frac{1}{2}d^{2}\left(  -e_{1}%
X_{2}-e_{2}Y_{2}+e_{1}e_{2}\left[  X_{1},Y_{1}\right]  \right) \\
& =\frac{1}{2}d^{2}2e_{1}e_{2}\left[  X_{1},Y_{1}\right]
\end{align*}

\end{proof}

\begin{theorem}
\label{t7.3}With $d\in D_{3}$ and $e_{1},e_{2}\in D$, we have
\begin{align*}
& \left[  de_{1}X_{1}+\frac{1}{2}d^{2}e_{1}X_{2}+\frac{1}{6}d^{3}e_{1}%
X_{3},de_{2}Y_{1}+\frac{1}{2}d^{2}e_{2}Y_{2}+\frac{1}{6}d^{3}e_{2}Y_{3}\right]
\\
& =d^{2}e\left[  X_{1},Y_{1}\right]  +\frac{1}{2}d^{3}e\left(  \left[
X_{1},Y_{2}\right]  +\left[  X_{2},Y_{1}\right]  \right)  \text{,}%
\end{align*}
as is expected in Corollary \ref{t3.1.2}.
\end{theorem}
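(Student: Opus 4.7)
The plan is to mimic the commutator computation in the proof of Theorem~\ref{t7.2}, now using the cubic group law of Definition~\ref{d6.1}(3). Write $a=de_{1}X_{1}+\frac{1}{2}d^{2}e_{1}X_{2}+\frac{1}{6}d^{3}e_{1}X_{3}$ and $b=de_{2}Y_{1}+\frac{1}{2}d^{2}e_{2}Y_{2}+\frac{1}{6}d^{3}e_{2}Y_{3}$. The Lie bracket in $(\mathfrak{g}^{D_{3}})_{0}$ is recovered as the group-theoretic commutator $a.b.a^{-1}.b^{-1}$, where by Theorem~\ref{t7.0} we have $a^{-1}=d(-e_{1}X_{1})+\frac{1}{2}d^{2}(-e_{1}X_{2})+\frac{1}{6}d^{3}(-e_{1}X_{3})$ and similarly for $b^{-1}$.

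First I would apply Definition~\ref{d6.1}(3) to evaluate $a.b$. The crucial simplification is that $e_{1},e_{2}\in D$, so $e_{1}^{2}=e_{2}^{2}=0$; hence the triple-bracket correction $\frac{1}{2}[e_{1}X_{1}-e_{2}Y_{1},e_{1}X_{1},e_{2}Y_{1}]$ collapses, every one of its monomials containing a squared $e_{i}$. The cubic coefficient of $a.b$ therefore reduces to $e_{1}X_{3}+e_{2}Y_{3}+\frac{3}{2}e_{1}e_{2}([X_{1},Y_{2}]+[X_{2},Y_{1}])$, while the quadratic and linear pieces reproduce the quadratic case already handled in Theorem~\ref{t7.2}.

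Next I would multiply on the right by $a^{-1}$ and then by $b^{-1}$, invoking associativity (Theorem~\ref{t6.3}) to avoid reparenthesising. At each application of Definition~\ref{d6.1}(3) the triple-bracket correction $\frac{1}{2}[U-V,U,V]$ again dies modulo $e_{i}^{2}=0$, while the ordinary $[\cdot,\cdot]$ corrections contribute only $e_{1}e_{2}$-monomials. The diagonal $\pm e_{1}$ and $\pm e_{2}$ contributions in degrees $1$, $2$ and $3$ telescope to zero, leaving in degree $2$ exactly $d^{2}e_{1}e_{2}[X_{1},Y_{1}]$ (this recovers Theorem~\ref{t7.2} as a consistency check) and in degree $3$ the sum $\frac{1}{2}d^{3}e_{1}e_{2}([X_{1},Y_{2}]+[X_{2},Y_{1}])$. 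Setting $e=e_{1}e_{2}$ gives the asserted formula, in agreement with the prediction of Corollary~\ref{t3.1.2}.

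The main obstacle is purely bookkeeping: three successive cubic products must be chained, and one must be disciplined about which $e_{1}e_{2}$-monomials survive the $e_{i}^{2}=0$ reductions. The conceptual point keeping everything tractable is that the only cubic correction in Definition~\ref{d6.1}(3) genuinely sensitive to the ordering of the arguments, namely $\frac{1}{2}[X_{1}-Y_{1},X_{1},Y_{1}]$, is systematically forced to vanish here because each of its three slots would have to accept an argument already carrying an $e_{i}$. Consequently, unlike in Theorem~\ref{t6.3}, one never needs to invoke the Jacobi identity as in Lemma~\ref{t6.3.1} to rearrange cubic brackets.
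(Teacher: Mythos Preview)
Your proposal is correct and follows essentially the same strategy as the paper: compute the group commutator $a.b.a^{-1}.b^{-1}$ directly from Definition~\ref{d6.1}(3), exploiting $e_{1}^{2}=e_{2}^{2}=0$ to annihilate every triple-bracket correction. The only cosmetic difference is that the paper parenthesises as $(a.b).(a^{-1}.b^{-1})$, computing each half by one application of the formula and then a third to combine them, whereas you chain $((a.b).a^{-1}).b^{-1}$; by the associativity already established, both routes land on the same expression.
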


\begin{proof}%
\begin{align*}
& de_{1}X_{1}+\frac{1}{2}d^{2}e_{1}X_{2}+\frac{1}{6}d^{3}e_{1}X_{3}%
.de_{2}Y_{1}+\frac{1}{2}d^{2}e_{2}Y_{2}+\frac{1}{6}d^{3}e_{2}Y_{3}.\\
& d\left(  -e_{1}\right)  X_{1}+\frac{1}{2}d^{2}\left(  -e_{1}\right)
X_{2}+\frac{1}{6}d^{3}\left(  -e_{1}\right)  X_{3}.d\left(  -e_{2}\right)
Y_{1}+\frac{1}{2}d^{2}\left(  -e_{2}\right)  Y_{2}+\frac{1}{6}d^{3}\left(
-e_{2}\right)  Y_{3}\\
& =d\left(  e_{1}X_{1}+e_{2}Y_{1}\right)  +\frac{1}{2}d^{2}\left(  e_{1}%
X_{2}+e_{2}Y_{2}+e_{1}e_{2}\left[  X_{1},Y_{1}\right]  \right)  +\\
& \frac{1}{6}d^{3}\left\{  \left(  e_{1}X_{3}+e_{2}Y_{3}\right)  +\frac{3}%
{2}e_{1}e_{2}\left(  \left[  X_{1},Y_{2}\right]  +\left[  X_{2},Y_{1}\right]
\right)  \right\}  .\\
& d\left(  \left(  -e_{1}\right)  X_{1}+\left(  -e_{2}\right)  Y_{1}\right)
+\frac{1}{2}d^{2}\left(  \left(  -e_{1}\right)  X_{2}+\left(  -e_{2}\right)
Y_{2}+e_{1}e_{2}\left[  X_{1},Y_{1}\right]  \right)  +\\
& \frac{1}{6}d^{3}\left\{  \left(  \left(  -e_{1}\right)  X_{3}+\left(
-e_{2}\right)  Y_{3}\right)  +\frac{3}{2}e_{1}e_{2}\left(  \left[  X_{1}%
,Y_{2}\right]  +\left[  X_{2},Y_{1}\right]  \right)  \right\} \\
& =d^{2}e_{1}e_{2}\left[  X_{1},Y_{1}\right]  +\frac{1}{2}d^{3}e_{1}%
e_{2}\left(  \left[  X_{1},Y_{2}\right]  +\left[  X_{2},Y_{1}\right]  \right)
\end{align*}

\end{proof}

\end{document}